\newcommand{\mc}[1]{{}}
\newcommand {\bi} {{\bf i}} 
\DeclareMathRadical{\sqrtsign}{symbols}{"70}{largesymbols}{"70}
\newcommand{\bb}{\mathbb}
\newcommand{\cx}{{\bb C}}
\newcommand{\integers}{{\bb Z}}
\newcommand{\natls}{{\bb N}}
\newcommand{\reals}{{\bb R}}
\newlength{\figboxwidth}             
\renewcommand{\bold}[1]{\medskip \noindent {\bf #1 }\nopagebreak}
\newcommand{\dirsum}{\oplus}
\newcommand{\tensor}{\otimes}
\newcommand{\cross}{\times}
\newcommand{\st}{\;\: : \;\:}         
\newcommand{\zed}{\integers}
\newcommand{\Sp}{\operatorname{Sp}}
\def\@ifundefined#1#2#3%
\theoremstyle{plain} 
\newtheorem{theorem}{Theorem}[section]
\newtheorem{proposition}[theorem]{Proposition}
\newtheorem{lemma}[theorem]{Lemma}
\newtheorem{corollary}[theorem]{Corollary}
\theoremstyle{definition} 
\newtheorem{definition}[theorem]{Definition}
\newcommand{\cG}{{\mathcal G}}
\newcommand{\cH}{{\mathcal H}}
\newcommand{\cL}{{\mathcal L}}
\newcommand{\cM}{{\mathcal M}}
\newcommand{\cN}{{\mathcal N}}
\mathchardef\GG="321D
\newcommand{\urltilde}{\kern -.15em\lower .7ex\hbox{~}\kern .04em}
\numberwithin{equation}{section}
\numberwithin{theo}{section}
\newcommand{\noz}{n}
\title[Symplectic and Isometric Subbundles]{Symplectic and Isometric
  $SL(2,\reals)$-invariant subbundles of the Hodge bundle}
\author{Artur Avila}
\address{
CNRS, IMJ-PRG, UMR 7586, Univ Paris Diderot, Sorbonne Paris Cit\'e,
Sorbonnes Universit\'es, UPMC Univ Paris 06, F-75013, Paris, France
}
\address{IMPA,
Estrada Dona Castorina 110, Rio de Janeiro, Brasil} 
\email{artur@math.univ-paris-diderot.fr}
\thanks{Research of the first author was supported by the ERC Starting Grant ``Quasiperiodic'' and by the Balzan project of Jacob Palis.}
\author{Alex Eskin}
\address{Alex Eskin: Department of Mathematics, University of Chicago,
  Chicago, IL 60637, USA}
\email{eskin@math.uchicago.edu}
\urladdr{http://www.math.uchicago.edu/~eskin}
\thanks{Research  of  the second author is partially supported  by
NSF grants DMS 0604251, DMS 0905912 and DMS 1201422
}
\author{Martin M\"oller}
\address{Martin M\"oller: Institut f\"ur Mathematik, Goethe-Universit\"at Frankfurt, Robert-Mayer-Str. 6-8, 
60325 Frankfurt am Main, Frankfurt, Germany.}
\email{moeller@math.uni-frankfurt.de.}
\urladdr{https://www.uni-frankfurt.de/fb/fb12/mathematik/ag/personen/moeller}
\thanks{Research of the third author is partially
supported by ERC grant 257137 ``Flat surfaces''}
\begin{document}

\maketitle

\section{Introduction}

Suppose $g \ge 1$, and 
let $\kappa = (\kappa_1,\dots, \kappa_\noz)$ be a partition of $2g-2$,
and 
let $\cH(\kappa)$ be a stratum of Abelian differentials,
i.e. the space of pairs $(M,\omega)$ where $M$ is a Riemann surface
and $\omega$ is a holomorphic $1$-form on $M$ whose zeroes have
multiplicities $\kappa_1 \dots \kappa_\noz$. The form $\omega$ defines a
canonical flat metric on $M$ with conical singularities at the zeros
of $\omega$. Thus we refer to points of $\cH(\kappa)$ as
{\em flat surfaces} or {\em translation surfaces}. For an introduction
to this subject, see the survey \cite{Zorich:survey}. 

The space $\cH(\kappa)$
admits an action of the group $SL(2,\reals)$ which generalizes the
action of $SL(2,\reals)$ on the space $GL(2,\reals)/SL(2,\zed)$ of flat
tori.

\bold{Affine measures and manifolds.}
The area of a translation surface is given by 
\begin{displaymath}
a(M,\omega) = \frac{i}{2} \int_M \omega \wedge \bar{\omega}.
\end{displaymath}
A ``unit hyperboloid'' $\cH_1(\kappa)$
is defined as a subset of translation surfaces in $\cH(\kappa)$ of
area one. 
For a subset $\cN_1 \subset \cH_1(\kappa)$ we write
\begin{displaymath}
\reals \cN_1 = \{ (M, t \omega) \;|\; (M,\omega) \in \cN_1, \quad t \in
\reals \} \subset \cH(\kappa).
\end{displaymath}
\begin{definition}
\label{def:affine:measure}
An ergodic $SL(2,\reals)$-invariant probability measure $\nu_1$ on
$\cH_1(\kappa)$ is called {\em affine} if the following hold:
\begin{itemize}
\item[{\rm (i)}] The support $\cN_1$ of $\nu_1$ is a suborbifold of
  $\cH_1(\kappa)$. Locally in period coordinates (see
  \S\ref{sec:forni:subspace} below), the suborbifold
  $\cN = \reals \cN_1$
  is defined as subset of $\cx^n$ 
by complex linear equations with real coefficients.  
\item[{\rm (ii)}] Let $\nu$ be the measure supported on $\cN$ so that $d\nu =
    d\nu_1 da$. Then $\nu$ is an affine linear measure in the period
    coordinates on $\cN$, i.e. it is (up to normalization) the
    restriction of Lebesgue measure to the subspace $\cN$. 
\end{itemize}
\end{definition}

\begin{definition}
\label{def:affine:invariant:submanifold}
We say that a suborbifold $\cN_1$ for which there exists a measure
$\nu_1$ such that the pair $(\cN_1, \nu_1)$ 
satisfies (i) and (ii) is an {\em affine invariant submanifold}. 
\end{definition}
Note that in particular, any affine invariant submanifold is a closed
subset of $\cH_1(\kappa)$ which is invariant under the $SL(2,\reals)$
action, and which in period coordinates looks like an affine subspace. 

We also consider the entire stratum $\cH(\kappa)$ to be an
(improper) affine invariant submanifold.

\bold{The tangent space of an affine submanifold.}
Suppose $\cN$ is an affine invariant submanifold. Then, by definition,
in period coordinates the tangent bundle $T_\cN$ of $\cN$ is determined
by a subspace $T_\cx(\cN)$ of the vector space the ambient manifold
is modeled on. Condition i) implies moreover that this
subspace is of the form 
\begin{displaymath} 
T_\cx(\cN) = \cx \tensor T_\reals(\cN),
\end{displaymath}
where $T_\reals(\cN) \subset H^1(M,\Sigma,\reals)$.
Let $p: H^1(M, \Sigma, \reals) \to H^1(M,\reals)$ be the natural map. 
We can then consider the 
subspace $p(T_\reals(\cN)) \subset H^1(M,\reals)$. 

\bold{The Forni subspace.} Let $\nu$ be a finite $SL(2,\reals)$-invariant
measure on $\cH_1(\kappa)$. For $\nu$-almost all $x \in \cH(\kappa)$, 
let $F(x) \subset H^1(M,\reals)$ be the
maximal $SL(2,\reals)$-invariant subspace on which the
Kontsevich-Zorich cocycle acts by isometries in the Hodge inner
product (see \S\ref{sec:forni:subspace} below). Then, the subspaces
$F(x)$ form an $SL(2,\reals)$-invariant subbundle of the Hodge bundle.

For the Masur-Veech (i.e. Lebesgue) measure on $\cH_1(\kappa)$, the Forni
subspace $F(x)
= \{ 0 \}$ almost everywhere. However, there exist  
affine $SL(2,\reals)$-invariant measures $\nu$ for which  $F(x) \ne 0$
for $\nu$-almost-all $x$, see e.g. \cite{Forni:handbook}, 
\cite{Forni:Matheus}. 

\bold{Terminology.} By the term Hodge bundle over an affine manifold $\cN$
we mean the (flat) vector bundle
$H^1_\reals$ with fiber $H^1(M,\reals)$ over the point $(M,\omega) \in
\cN$. By a flat subbundle, 
we mean a subbundle which is flat with respect to the Gauss-Manin
connection.  By an $SL(2,\reals)$-invariant subbundle we mean a
subbundle which is fiberwise equivariant with respect to the
$SL(2,\reals)$ action. Note that flat subbundles are automatically
$SL(2,\reals)$-invariant, but there exist $SL(2,\reals)$-invariant
subbunles which are not flat. 

\bold{The main results.} In this note, we prove the following:
\begin{theorem}
\label{theorem:forni:transverse:to:tangent}
Let $\nu$ be an affine measure on $\cH_1(\kappa)$ 
and let $\cN$ be affine submanifold on
which $\nu$ is supported. Then, 
\begin{itemize}
\item[{\rm (a)}] Except for a set of $\nu$-measure $0$, $F(x)$ is
  locally constant on $\cN$, and thus defines a flat subbundle of the
  Hodge bundle $H^1_\reals$ over $\cN$.  
\item[{\rm (b)}] For $\nu$-almost-all $x$,
$p(T_\reals(\cN))(x)$ is orthogonal to $F(x)$ with respect to the
Hodge inner product at $x$. 
\item[{\rm (c)}] For $\nu$-almost-all $x$,
$p(T_\reals(\cN))(x)$ is orthogonal to $F(x)$ with respect to the
intersection form. 
\end{itemize}
\end{theorem}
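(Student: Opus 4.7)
The approach is to combine Forni's intrinsic description of the Forni subspace with a Kodaira--Spencer-type variational formula for the Hodge inner product along $\cN$. At $x = (M,\omega)$ the subspace $F(x)$ admits the characterization
$F(x) = \{ c \in H^1(M,\reals) : B_{\omega}(c^{1,0}, \cdot) \equiv 0 \}$,
where $B_\omega$ is Forni's Hermitian second fundamental form on $H^{1,0}(M)$ encoding the failure of the Gauss--Manin connection to preserve the Hodge filtration. In particular $F$ is a Hodge substructure, hence $J$-invariant, and since $B_\omega$ is semi-definite the condition $c \in F$ is equivalent to the diagonal vanishing $B_\omega(c^{1,0}, c^{1,0}) = 0$.

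The analytic input is a variational identity: for Gauss--Manin flat local sections $c_1, c_2$ of the real Hodge bundle near $x$ and any $u \in T_\reals(\cN)$,
\[
D_u \langle c_1, c_2 \rangle_{\text{Hodge}} = 2\,\text{Re}\,\beta_x(c_1^{1,0}, c_2^{1,0})(p(u)),
\]
where $\beta_x$ is a sesquilinear extension of $B_\omega$ obtained from the Ahlfors--Rauch formula for the variation of the Hodge decomposition. Only $p(u)$ enters, since purely relative periods do not deform the Hodge structure on absolute cohomology.

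For (a) and (b): fix a $\nu$-generic $x$ whose $SL(2,\reals)$-orbit is dense in $\cN$ and extend $c \in F(x)$ to a Gauss--Manin flat local section. The Kontsevich--Zorich cocycle is Gauss--Manin parallel transport and acts by Hodge isometries on $F$, so $c(gx) \in F(gx)$ for every $g \in SL(2,\reals)$. Hence $B_{\omega(gx)}(c^{1,0}(gx), c^{1,0}(gx)) = 0$ along the orbit, and by continuity plus density of the orbit the same vanishing persists on a neighborhood of $x$ in $\cN$; semi-definiteness of $B$ then gives $c(y) \in F(y)$, proving (a). Consequently the function $y \mapsto \|c(y)\|_y^2$ is $SL(2,\reals)$-invariant, hence $\nu$-a.e.\ constant by ergodicity, and $D_u \|c\|_y^2 = 0$ at $y=x$ for every $u \in T_\reals(\cN)$. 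Setting $c_1 = c_2 = c$ in the variational identity yields $\text{Re}\,\beta_x(c^{1,0}, c^{1,0})(p(u)) = 0$; polarizing over $c \in F$ and identifying the resulting trilinear form with the Hodge pairing gives $\langle c, p(u)\rangle_{\text{Hodge}} = 0$, proving (b).

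For (c): since $F$ is $J$-invariant and $\langle \alpha, \beta\rangle_{\text{Hodge}} = \langle \alpha, J\beta\rangle_{\text{int}}$ on $H^1(M,\reals)$, the Hodge and intersection orthogonal complements of $F$ coincide, so (c) follows from (b). The main obstacle is establishing the variational identity in arbitrary directions $u \in T_\reals(\cN)$ (beyond the $SL(2,\reals)$-directions where Forni's formula is classical) and making the final identification of $\beta_x$ with the Hodge pairing explicit; the polarization step is also delicate since $\beta_x$ is Hermitian rather than complex bilinear, so recovering the full trilinear vanishing from the diagonal vanishing relies on $F$ being a real $J$-invariant subspace.
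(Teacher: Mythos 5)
Your route is genuinely different from the paper's, and it contains a gap at the decisive step that you yourself flag but do not close.

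The paper proves (a) and (b) together via Proposition~\ref{prop:no:yeti}: it constructs a flat connection $P^+$ on the Forni subbundle along unstable leaves (and a companion $P^-$ on stable leaves), derives an explicit formula for $P^+$ using the real analyticity of $F$ and the Lyapunov filtration (Lemma~\ref{lemma:formula:v}), and then computes the holonomy of $P^- P^+ P^- P^+$ around a small square. If some $\delta \in T_\reals\cN$ has $p(\delta)\notin F^\perp(x)$, the holonomy is the nontrivial shear $v\mapsto v+\epsilon\langle v,p(\delta)\rangle p(\delta)$, which either leaves $F$ or fails to be a Hodge isometry on $F$, contradicting Lemma~\ref{lemma:parallel:transport:hodge:inv}. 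Part (c) is then Theorem~\ref{theorem:properties:forni}(a). None of this machinery appears in your argument.

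Your argument for (b) has the following problem. Granting the variational identity (which, after the appropriate bookkeeping, says $D_u\|c\|^2$ is governed by the second fundamental form $\Phi(u)(\alpha,\beta)=\int_M p(u)^{0,1}\alpha\beta/\omega$ evaluated on $(c^{1,0},c^{1,0})$), and granting the polarization (which, using $J$-invariance of $F$, upgrades $\operatorname{Re}\Phi(u)(c^{1,0},c^{1,0})=0$ to $\Phi(u)|_{F^{1,0}\times F^{1,0}}=0$), you are left with the statement that the symmetric form $\Phi(u)$ vanishes on $F^{1,0}\times F^{1,0}$ for every $u\in T_\reals\cN$. That is a statement about a symmetric 2-tensor on $H^{1,0}$, quadratic in $F$. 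What you need is the bilinear statement $\int_M p(u)\wedge *c=0$ for all $c\in F$, i.e.\ $\int_M p(u)^{1,0}\wedge\overline{c^{1,0}}=0$. There is no polarization that turns the quadratic vanishing $\int p(u)^{0,1}\,\alpha\beta/\omega=0$ on $F^{1,0}$ into the linear pairing $\int p(u)^{1,0}\wedge\overline\alpha=0$; the two quantities have different arity in $c$ and the form $\Phi(u)$ involves the extra factor $1/\omega$ and a genuinely different contraction. You write ``identifying the resulting trilinear form with the Hodge pairing'' but no such identification exists, and indeed $\omega\notin F^{1,0}$ (since $F$ is symplectically orthogonal to the $SL(2,\reals)$ orbit), which blocks the obvious way one might try to produce it. So the final, crucial step of (b) is missing.

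There is also a smaller gap in (a): the identity $F(x)=\{c: B_\omega^\reals(c,\cdot)\equiv 0\}$ that you take as the starting point is not the definition in (\ref{eq:Forni:analytic:def}); $F(x)$ is the intersection $\bigcap_g g^{-1}\operatorname{Ann}B^\reals_{gx}$, which can be strictly smaller than $\operatorname{Ann}B^\reals_x$. Your density-plus-continuity argument gives $c(y)\in\operatorname{Ann}B^\reals_y$ for $y$ near $x$, but to conclude $c(y)\in F(y)$ you additionally need to invoke that the flat extension of $F(x)$ is an $SL(2,\reals)$-invariant subbundle lying in $\operatorname{Ann}B^\reals$ and hence isometric by \cite[Theorem 2]{Forni:Matheus:Zorich}, then compare dimensions. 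This can be repaired, but as written it is incomplete. Part (c), given (b) and $J$-invariance of $F$, is fine; the $J$-invariance itself is the content of Theorem~\ref{theorem:properties:forni}(a) and should be cited from there rather than derived from the (incorrect) annihilator characterization of $F$.
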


As corollaries, we get the following:
\begin{theorem}
\label{theorem:affine:symplectic}
Any affine manifold $\cN$ is symplectic, in the sense that the intersection
form is non-degenerate on $p(T_\reals(\cN))$. 
\end{theorem}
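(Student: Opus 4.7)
The plan is to derive Theorem~\ref{theorem:affine:symplectic} from Theorem~\ref{theorem:forni:transverse:to:tangent}. Write $V := p(T_\reals(\cN))(x)$ and let $\omega$ denote the intersection form on $H^1(M,\reals)$. The goal is to show that the symplectic radical
\[
K \;:=\; V \cap V^{\perp_\omega}
\]
is trivial for $\nu$-almost every $x$. The first observation is that $K$ is $SL(2,\reals)$-invariant, since $V$ is invariant by hypothesis and $\omega$ is preserved by the Kontsevich--Zorich cocycle; by construction $K$ is isotropic for $\omega$.

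The reduction to the Forni subspace is the clean part. Suppose I can establish the inclusion $K \subseteq F(x)$ for $\nu$-a.e.\ $x$. Then by Theorem~\ref{theorem:forni:transverse:to:tangent}(b), the Hodge inner product vanishes on $V \times F$, and since $K \subseteq V$ and $K \subseteq F$ it vanishes on $K \times K$; positive-definiteness of the Hodge form then forces $K = 0$, as desired.

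The hard part will be the inclusion $K \subseteq F$, that is, showing that every $SL(2,\reals)$-invariant isotropic subbundle of the Hodge bundle lies inside the maximal isometric subbundle $F$. Heuristically, such a subbundle lacks the symplectic pairing needed to balance expansion and contraction under the Kontsevich--Zorich cocycle, so that cocycle must act isometrically on it. To make this precise I would invoke Forni's variational formula for the Hodge norm along the Teichm\"uller flow: the second fundamental form measuring deviation from isometry is controlled by intersection pairings, which vanish on $K$ by isotropy. This yields preservation of the Hodge norm on $K$, whence $K \subseteq F$ by the defining property of $F$. An alternative framing, useful if the variational approach is awkward, uses part (c) of Theorem~\ref{theorem:forni:transverse:to:tangent} together with the fact that $F$ is $\ast$-invariant and hence $\omega$-symplectic, giving a decomposition $H^1 = F \oplus F^{\perp_\omega}$ with $V \subseteq F^{\perp_\omega}$; the problem then becomes the exclusion of $SL(2,\reals)$-invariant isotropic subbundles of $F^{\perp_\omega}$, which is essentially the same obstacle phrased within the symplectic complement of $F$.
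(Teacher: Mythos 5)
Your overall strategy matches the paper's: reduce the non-degeneracy question to the Forni subspace $F$ and exploit the Hodge-orthogonality $p(T_\reals\cN)\perp F$ from Theorem~\ref{theorem:forni:transverse:to:tangent}. The paper does this by choosing (via Theorem~\ref{theorem:KZ:semisimple}) an irreducible decomposition adapted to $p(T_\reals\cN)$, observing that the isotropic irreducible pieces lie in $F$, and concluding via Proposition~\ref{prop:no:yeti} that there are none; you instead pass directly to the symplectic radical $K = V\cap V^{\perp_\omega}$. That reformulation is fine, and your reduction ``$K\subseteq F$ plus Theorem~\ref{theorem:forni:transverse:to:tangent}(b) forces $K=0$'' is correct.

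The genuine gap is in your treatment of the ``hard part.'' You argue that the obstruction to isometry ``is controlled by intersection pairings, which vanish on $K$ by isotropy.'' This conflates two different bilinear forms: the term in Forni's variational formula is
$B^\reals_\omega(\alpha,\beta)=\int_M \alpha\beta\,\overline{\omega}/\omega$,
which is \emph{not} the intersection form $\int_M\alpha\wedge\beta$. Isotropy of $K$ for the intersection form does not give vanishing of $B^\reals$ on $K$, so your variational sketch does not establish $K\subseteq F$ as written. In the paper this implication (``invariant isotropic subbundle $\Rightarrow$ contained in $F$'') is not derived from scratch; it is imported from Forni--Matheus--Zorich via Theorem~\ref{theorem:properties:forni} (and, in the proof of Theorem~\ref{theorem:locally:constant:semisimple}, via \cite[Theorems A.4, A.5]{Eskin:Mirzakhani:measures}). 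You should cite it rather than attempt a one-paragraph re-derivation.

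Ironically, the ``alternative framing'' you dismiss at the end is in fact the cleanest route and requires none of this. By Proposition~\ref{prop:no:yeti} (equivalently Theorem~\ref{theorem:forni:transverse:to:tangent}(b) together with Theorem~\ref{theorem:properties:forni}(a), which identifies $F^\perp$ with $F^\dagger$) you have $V = p(T_\reals\cN)\subset F^\perp$. Theorem~\ref{theorem:properties:forni}(b) then says outright that \emph{any} $SL(2,\reals)$-invariant subbundle of $F^\perp$ is symplectic. Applied to $V$ itself this is the whole theorem; there is no residual ``exclusion of isotropic subbundles of $F^{\perp}$'' left to prove. Recognizing that Theorem~\ref{theorem:properties:forni}(b) closes the argument would turn your second framing into a complete and shorter proof than either your first sketch or the paper's.
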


\begin{theorem}
\label{theorem:locally:constant:semisimple}
The Hodge bundle $H^1_\reals$ 
over any affine manifold $\cN$ is semisimple, in the
sense that any flat subbundle has
a complementary flat subbundle. 
\end{theorem}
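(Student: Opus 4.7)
The plan is to construct the complement via symplectic orthogonalization. Since the intersection form $\omega$ on the Hodge bundle is locally constant and $SL(2,\reals)$-invariant, the symplectic orthogonal $V^{\perp_\omega}$ of any locally constant $SL(2,\reals)$-invariant subbundle $V$ is itself locally constant and invariant. If $V$ is symplectic, meaning $\omega|_V$ is non-degenerate, then $V \oplus V^{\perp_\omega} = H^1$ and we are done. The essential case is therefore the one in which $V_0 := V \cap V^{\perp_\omega}$ is a nonzero isotropic, locally constant, $SL(2,\reals)$-invariant subbundle.

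The key reduction is to show $V_0 \subseteq F$, in parallel with the proof strategy underlying Theorem~\ref{theorem:affine:symplectic}. On an isotropic invariant subbundle, the Hodge norm should be controllable purely through symplectic pairings with a transverse invariant direction, and since those are preserved by the Kontsevich-Zorich cocycle, the cocycle must act by Hodge isometries on $V_0$; by the maximality defining $F$, this gives $V_0 \subseteq F$.

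Once $V_0 \subseteq F$, the decomposition is assembled using the Hodge inner product on $F$. Because the Kontsevich-Zorich cocycle preserves the Hodge inner product on $F$ and $F$ is locally constant on $\cN$ by Theorem~\ref{theorem:forni:transverse:to:tangent}(a), this inner product is parallel for the Gauss-Manin connection on $F$. Consequently the Hodge-orthogonal complement of $V_0$ inside $F$ is a locally constant $SL(2,\reals)$-invariant subbundle $W_F$. To extend the splitting to the full Hodge bundle, one needs a locally constant invariant complement $F^{c}$ of $F$ in $H^1$; Theorem~\ref{theorem:forni:transverse:to:tangent}(b),(c) ensure that tangent directions are both Hodge- and symplectically orthogonal to $F$, which should allow one to identify $F^{\perp_\omega}$ as such a complement once one verifies that $F$ itself is symplectic. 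Gluing $W_F$ with a locally constant invariant complement of the image of $V$ in $F^{c}$ then produces the desired complement to $V$ in $H^1$.

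The principal obstacle is a pair of Hodge-theoretic claims driving the construction: that every isotropic locally constant $SL(2,\reals)$-invariant subbundle lies in $F$, and that $F$ itself is symplectic (so that $F^{\perp_\omega}$ complements it). Both go beyond the formal content of symplectic invariance and rely on the curvature and second-fundamental-form estimates at the heart of the Forni subspace theory.
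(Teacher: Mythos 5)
Your proposal follows the same overall route as the paper: take the symplectic orthogonal, isolate the isotropic kernel $V_0 = V \cap V^{\perp_\omega}$, show $V_0 \subseteq F$, use the parallel Hodge metric on $F$ (from Lemma~\ref{lemma:parallel:transport:hodge:inv} and Proposition~\ref{prop:no:yeti}) to split inside $F$, and combine with a locally constant complement of $F$. The inputs you flag as needing work are indeed supplied in the paper: $V_0 \subseteq F$ follows from \cite[Theorems A.4, A.5]{Eskin:Mirzakhani:measures} together with Theorem~\ref{theorem:forni:subspace:ergodic:def}, and the fact that $F$ is symplectic with $F^\dagger = F^\perp$ is Theorem~\ref{theorem:properties:forni}(a).

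However, your final gluing step has a genuine gap. You define $W_F$ as the Hodge-orthogonal complement of $V_0$ inside $F$ and then propose $W_F \oplus W_{F^c}$ as the complement, where $W_{F^c}$ complements $\pi(V)$ in $F^c$. This fails to be transverse to $V$ whenever $V \cap F$ is strictly larger than $V_0$: any $v \in V\cap F$ which is Hodge-orthogonal to $V_0$ lies in $W_F \cap V$, and such $v \ne 0$ exist as soon as $\dim(V\cap F) > \dim V_0$. There is no reason for $V \cap F = V_0$ in general; for instance $V$ could contain a symplectic summand lying entirely inside $F$. The repair is to define $W_F$ as the Hodge-orthogonal complement of $V \cap F$ in $F$ (which is still locally constant and invariant, since $V\cap F$ is and the Hodge metric is parallel on $F$), after which the dimension count and transversality both go through cleanly. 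Interestingly, with this fix the reduction $V_0 \subseteq F$ is no longer needed at all: one only needs the locally constant decomposition $H^1 = F \oplus F^\perp$, the parallel metric on $F$, and the fact from Theorem~\ref{theorem:properties:forni}(b) that any invariant subbundle of $F^\perp$ (in particular $\pi(V)$) is symplectic, hence has a symplectic complement inside $F^\perp$. You should also state that last input explicitly; as written, the existence of a locally constant invariant complement of $\pi(V)$ inside $F^c$ is asserted without justification.
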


However the main motivation for this paper is that a somewhat more 
technical version of
Theorem~\ref{theorem:forni:transverse:to:tangent} where one does not
assume that $\nu$ is an affine measure, see
Theorem~\ref{theorem:technical} below, is needed in
\cite{Eskin:Mirzakhani:measures} to complete the proof of the fact
that all $SL(2,\reals)$-invariant measures are
affine. (Theorem~\ref{theorem:technical} is only needed in
\cite{Eskin:Mirzakhani:measures} in the presence of relative homology). 

\bold{Organization of the paper.} 
In \S\ref{sec:forni:subspace}, we
recall the definitions of the Kontsevich-Zorich cocycle and the Forni
subspace, and make some preliminary statements. In
\S\ref{sec:real:analytic:envelope}, we define the ``real-analytic
envelope'', which is local real-analytic
version of the Zariski closure of the support of an
$SL(2,\reals)$-invariant measure, and prove that it is locally
affine. (This construction is only needed for the proof of
Theorem~\ref{theorem:technical}; in that case, in view of the fact
that the Forni subspace is a real-analytic object, we use it to effectively
replace the support of the measure by its real-analytic envelope, 
which is an affine subspace). 

In \S\ref{sec:forni:revisited}, we state some additional local
properties of the Forni subspace. In \S\ref{sec:connection}, we define
a connection along unstable (and stable) submanifolds which is
different from the Gauss-Manin connection; we then show that the Forni
subspace is equivariant and isometric 
with respect to this connection, and that the
restriction of the connection to the Forni subspace is
real-analytic. This allows us, in \S\ref{sec:preliminary:formula}, to
prove a preliminary formula for the variation of the Forni subspace
along stable and unstable leaves. Finally, in \S\ref{sec:curvature},
we use this formula to 
compute the parallel transport a vector in the Forni subspace along a closed
path composed of segments along stable and unstable leaves, and show that
the resulting monodromy map is unipotent. Since the monodromy must
also take values in a compact group, this shows that the monodromy map
is the identity, from which we deduce that the Forni subspace is
flat. 

We note that, from the point of view of partially hyperbolic dynamics,
the connection we use is given by the holonomy along the strong
foliations of a fiber bunched cocycle.  The calculation in this paper
arises naturally when showing that the projective action of the
groupoid generated by the holonomies does not preserve some measure, a
property which is commonly used to establish the non-triviality of
Lyapunov spectra, see e.g. \cite{Bonatti:Gomez-Mont:Viana},
\cite{Avila:Viana} and references therein.

\bold{Acknowledgements.} The authors would like to thank Alex Wright
for making several useful suggestions, in particular regarding the
formulation of Theorem~\ref{theorem:forni:transverse:to:tangent} and
Theorem~\ref{theorem:locally:constant:semisimple}. 
We would also like
to thank the referee for his careful reading of the paper and for
making numerous detailed comments 
which have helped us to greatly improve the
presentation. 

\section{The Kontsevich-Zorich Cocycle}
\label{sec:forni:subspace}

\bold{Algebraic Hulls.} The algebraic hull of a cocycle is defined in
\cite{ZimmerBook}. We quickly recall the definition: 
Suppose a group $G$ acts on a space $X$, preserving a measure $\nu$, 
and suppose $H$ is an $\reals$-algebraic group. 
Let $A: G \cross X \to H$ be a cocycle, i.e. $A$ is a measurable map,
such that $A(g_1 g_2 x) = A(g_1, g_2 x) A(g_2, x)$.  We say that the
$\reals$-algebraic subgroup $H'$ of $H$ is the {\em algebraic hull} of
$A$ if $H'$ is the smallest $\reals$-algebraic subgroup of $H$ such
that there exists a measurable map $C: X \to H$ with the property that
\begin{displaymath}
C( g x)^{-1} A(g,x) C(x) \in H' \quad \text{for almost all $g \in G$
  and almost all $x \in X$. }
\end{displaymath}
It is shown in \cite{ZimmerBook} \mc{give exact reference} that the
algebraic hull exists and is unique up to conjugation.

\bold{Period Coordinates.}
Let $\Sigma \subset M$ denote the set of zeroes of $\omega$. Let $n =
|\Sigma|$, and let $k = 2g + n-1$. Let $\{\gamma_1, \dots, \gamma_{2g}
\}$ be a symplectic $\zed$-basis for the homology group $H_1(M,\zed)$,
and let
$\{ \gamma_1, \dots, \gamma_k\}$ denote its extension to  a 
$\zed$-basis for the relative 
homology group $H_1(M,\Sigma, \zed)$. We can define a map $\Phi:
\cH(\kappa) \to \cx^k$ by 
\begin{displaymath}
\Phi(M,\omega) = \left( \int_{\gamma_1} \omega, \dots, \int_{\gamma_k}
  w \right)
\end{displaymath}
The map $\Phi$ (which depends on a choice of the basis $\{ \gamma_1,
\dots, \gamma_k\}$) is a local coordinate system on the stratum
$\cH(\kappa)$ near $(M,\omega)$.  
Alternatively,
we may think of the cohomology class $[\omega] \in H^1(M,\Sigma, \cx)$ as a
local coordinate on the stratum $\cH(\kappa)$. We
will call these coordinates {\em period coordinates}. 

\bold{The $SL(2,\reals)$-action and the Kontsevich-Zorich cocycle.}
We write $\Phi(M,\omega)$ as a $2$ by $n$ matrix $x$. The action of $g =
\left(\begin{smallmatrix} a & b \\ c & d \end{smallmatrix} \right) \in
SL(2,\reals)$ in these coordinates is linear. We choose some
fundamental domain for the action of the mapping class group, and
think of the dynamics on the fundamental domain. Then, the
$SL(2,\reals)$ action becomes
\begin{displaymath}
x = \begin{pmatrix} x_1 & \dots & x_k \\ y_1 & \dots & y_k \end{pmatrix}
\to gx = \begin{pmatrix} a & b \\ c & d \end{pmatrix} \begin{pmatrix} x_1 & \dots & x_k \\ y_1 & \dots & y_k
\end{pmatrix} A(g,x),
\end{displaymath}
where $A(g,x) \in \Sp(2g,\zed) \ltimes \reals^{n-1}$ is the {\em Kontsevich-Zorich
cocycle}. Thus, $A(g,x)$ is change of basis one needs to perform to return the
point $gx$ to the fundamental domain. It can be interpreted as the
monodromy of the Gauss-Manin connection (restricted to the orbit of
$SL(2,\reals)$).

The following theorem is essentially due to Forni \cite{Forni:Deviation}, and
Forni-Matheus-Zorich \cite{Forni:Matheus:Zorich:Lyapunov}. It is stated as
\cite[Theorem~A.6]{Eskin:Mirzakhani:measures}. 
For a self-contained proof (which
essentially consists of references to \cite{Forni:Matheus:Zorich:Lyapunov}) 
see Appendix~A of
\cite{Eskin:Mirzakhani:measures}. 

\begin{theorem}
\label{theorem:KZ:semisimple}
Let $\nu$ be an ergodic $SL(2,\reals)$-invariant probability measure. Then, 
\begin{itemize}
\item[{\rm (a)}] The $\nu$-algebraic hull $\cG$ of the
  Kontsevich-Zorich cocycle is semi\-simple.
\item[{\rm (b)}] On any connected finite cover of $\cH_1(\kappa)$,
  each $\nu$-measurable \mc{strongly???} irreducible $SL(2,\reals)$-invariant
   subbundle of the Hodge bundle is either
  symplectic or isotropic. 
\end{itemize}
\end{theorem}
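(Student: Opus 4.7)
The plan is to build an $SL(2,\reals)$-invariant splitting of the Hodge bundle that isolates the Forni subspace, and then to treat the two summands by complementary arguments. The starting point is the basic dichotomy provided by Forni-Matheus-Zorich: on $F(x)$ the Kontsevich-Zorich cocycle acts as isometries of the Hodge inner product, while on the Hodge orthogonal complement $F(x)^{\perp_H}$ the cocycle has \emph{no} zero Lyapunov exponents (this is the substantive analytic input, coming from the formulas for the second fundamental form of the Hodge bundle inside the flat bundle).

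The first step is to verify that the decomposition $H^1(M,\reals) = F(x) \oplus F(x)^{\perp_H}$ is $SL(2,\reals)$-invariant and moreover symplectic. Invariance follows because Hodge isometries preserve orthogonal complements; the symplectic compatibility follows because the Hodge form and the intersection form are related by the Hodge star, so the Hodge orthogonal of an invariant symplectic (or Lagrangian) piece is again symplectically compatible. This reduces the analysis of the algebraic hull $\cG$ to its restrictions $\cG_F$ and $\cG_{F^\perp}$ to the two summands.

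For part (a), I would handle the two summands separately. On $F(x)$, the cocycle takes values in the compact group of symplectic Hodge isometries, and the algebraic hull of a cocycle taking values in a compact algebraic group is itself a compact algebraic group, hence reductive, and in fact semisimple after passing to the identity component of the intersection with the symplectic group. On $F(x)^{\perp_H}$, one uses the absence of zero Lyapunov exponents: if the unipotent radical $\cU$ of $\cG_{F^\perp}$ were nontrivial, then one could find a nonzero vector in the $\cU$-fixed subspace (using that unipotent actions always have fixed vectors), producing an $\cG_{F^\perp}$-invariant flag. The bottom of this flag would carry a cocycle with values in a unipotent group, forcing a zero Lyapunov exponent on $F(x)^{\perp_H}$, a contradiction. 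Hence $\cG_{F^\perp}$ is reductive, and since it sits inside $\Sp$ with no invariant form of deficient rank, it is semisimple. Combining the two, $\cG$ is semisimple.

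For part (b), let $V$ be a $\nu$-measurable irreducible $SL(2,\reals)$-invariant subbundle of the Hodge bundle (on a finite cover). Consider the symplectic form $\langle\cdot,\cdot\rangle$ restricted to $V$; its radical is $V \cap V^{\perp_\omega}$, which is again $SL(2,\reals)$-invariant because the cocycle preserves both $V$ and the symplectic form. By irreducibility, this radical is either all of $V$, in which case $V$ is isotropic, or $\{0\}$, in which case the restriction of the symplectic form is non-degenerate and $V$ is symplectic. This is exactly the dichotomy claimed.

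The main obstacle is the analytic input in step one, namely the fact that on the Hodge orthogonal complement of the Forni subspace all Lyapunov exponents are nonzero; this is the content of the Forni-Matheus-Zorich refinement of Forni's criterion and rests on a delicate analysis of the variation of Hodge structures over the $SL(2,\reals)$-orbit. Once that is in hand, the algebraic arguments above — irreducibility applied to the symplectic radical, and the unipotent-radical-gives-zero-exponent argument — are essentially formal.
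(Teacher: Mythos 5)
Your argument for part (b) is correct and essentially formal: since the cocycle preserves the intersection form, $V \cap V^{\perp_\omega}$ is an invariant subbundle of the irreducible $V$, hence $\{0\}$ or $V$. This is fine, and it is the standard argument. For part (a), however, the proposal rests on a premise that the paper itself explicitly contradicts. You assert that on $F(x)^{\perp_H}$ the cocycle has \emph{no} zero Lyapunov exponents, calling this "the substantive analytic input." But the remark following Theorem~\ref{theorem:properties:forni} in the paper states that the zero-Lyapunov subspace of the Kontsevich-Zorich cocycle may be strictly larger than $F$ (with the example in \cite{Forni:Matheus:Zorich}); only the weaker statement holds, that every $SL(2,\reals)$-\emph{invariant} subbundle of $F^\perp$ has at least one nonzero exponent. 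So your bifurcation of the analysis is already built on a false fact.

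Even granting that fact, the argument from the unipotent radical would not go through. If $\cU$ is the unipotent radical of $\cG_{F^\perp}$, the $\cU$-fixed vectors do form a $\cG_{F^\perp}$-invariant subbundle, but the restriction of the cocycle to that bottom piece factors through the Levi quotient $\cG_{F^\perp}/\cU$, which is reductive; it does not "carry a cocycle with values in a unipotent group," and so there is no reason it forces a zero exponent. (The actual link between non-reductive hull and Lyapunov data is subtler and goes through the $SL(2,\reals)$-structure, which is precisely what makes this a theorem and not a formality.) Two further soft spots: the $SL(2,\reals)$-invariance of the Hodge-orthogonal complement $F^\perp$ is not automatic — the cocycle is a Hodge isometry only on $F$, and a map preserving $F$ and acting isometrically there need not preserve $F^\perp$; this is the content of Theorem~\ref{theorem:properties:forni}(a) (that $F^\perp = F^\dagger$), which itself requires proof. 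And compactness of $\cG_F$ gives reductivity, not semisimplicity, without extra input — a torus factor is not ruled out by your argument. For context, the paper does not re-prove this theorem at all: it cites \cite[Theorem~A.6]{Eskin:Mirzakhani:measures}, whose proof in turn references \cite{Forni:Matheus:Zorich}. Your attempt to reconstruct the argument is in the right spirit but currently has gaps at exactly the points where the FMZ analysis is doing real work.
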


\bold{The Forni subspace.} 

\begin{definition}[Forni Subspace]
\label{theorem:Forni:analytic:def}
Let
\begin{equation}
\label{eq:Forni:analytic:def}
F(x) = \bigcap_{g \in SL(2,\reals)} g^{-1} (\operatorname{Ann}
B^{\reals}_{gx}), 
\end{equation}
where for $(M,\omega) \in \cH_1(\kappa)$ the quadratic form
$B_\omega^\reals( \cdot, \cdot )$ is as defined in
\cite{Forni:Matheus:Zorich:Lyapunov} by
\begin{displaymath}
B_\omega^\reals( \alpha, \beta ) = \int_M \alpha \beta \frac{\overline{\omega}}{\omega}.
\end{displaymath}
\end{definition}

\bold{Remark.} The form $B_\omega^\reals( \cdot, \cdot )$ measures the
derivative of the corresponding period matrix entry along a
Teichm\"uller deformation in the direction of the quadratic differential
$\omega^2$ (Ahlfors-Rauch variational formula).
\par
It is clear from the definition, that as long as its
dimension remains constant, $F(x)$ varies real-analytically with $x$.  

\begin{theorem}
\label{theorem:forni:subspace:ergodic:def}
Suppose $\nu$ is an ergodic
$SL(2,\reals)$-invariant probability 
measure. Then the subspaces $F(x)$ where $x$
varies over the support of $\nu$ form the maximal 
$\nu$-measurable $SL(2,\reals)$-invariant 
isometric subbundle of the Hodge
bundle. 
\end{theorem}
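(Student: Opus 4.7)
The plan is to identify $F(x)$ as the locus on which the Kontsevich--Zorich cocycle preserves the Hodge norm, using the variational formula noted in the remark after Definition~\ref{theorem:Forni:analytic:def}: the first-order change of the Hodge norm along the Teichm\"uller geodesic flow $g_t$ is controlled by $B^\reals_\omega$, while the rotation subgroup $SO(2,\reals)$ acts by Hodge isometries (since rotating $\omega$ amounts to multiplying by a unit complex scalar, leaving the underlying complex structure on $M$ unchanged). Three things must be verified: that $F$ is a measurable, $SL(2,\reals)$-invariant subbundle, that the cocycle acts isometrically on it, and that any other such subbundle is contained in it.

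First I would check $SL(2,\reals)$-invariance of $F$ directly from the definition: for any $h \in SL(2,\reals)$, reindexing the intersection by $g' = gh$ gives
\begin{equation*}
F(hx) \;=\; \bigcap_{g \in SL(2,\reals)} g^{-1}\operatorname{Ann} B^\reals_{(gh)x} \;=\; h\bigcap_{g' \in SL(2,\reals)} (g')^{-1}\operatorname{Ann} B^\reals_{g'x} \;=\; h\,F(x).
\end{equation*}
Combined with the real-analytic dependence mentioned below Definition~\ref{theorem:Forni:analytic:def} and ergodic decomposition, this makes $\{F(x)\}$ into a measurable subbundle of essentially constant dimension. For the isometry property, fix $v \in F(x)$ and $g_0 \in SL(2,\reals)$, and consider a smooth one-parameter family $g_t$ through $g_0$. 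Using $\mathfrak{sl}_2(\reals) = \mathfrak{k} \oplus \mathfrak{p}$, the compact factor contributes nothing to $\frac{d}{dt}\|g_t v\|^2_{g_t x}\big|_{t=0}$ and the Teichm\"uller factor contributes a scalar multiple of $B^\reals_{g_0 x}(g_0 v,\, g_0 v)$, which vanishes because $g_0 v \in \operatorname{Ann} B^\reals_{g_0 x}$. Hence $g \mapsto \|g v\|_{gx}$ is constant, proving isometry.

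For maximality, let $V$ be any $\nu$-measurable, $SL(2,\reals)$-invariant isometric subbundle, fix $v \in V(x)$ and $g_0 \in SL(2,\reals)$. Constancy of $\|g_t v\|_{g_t x}$ along any path through $g_0$ combined with the variational formula yields $B^\reals_{g_0 x}(g_0 v,\, g_0 v) = 0$, and polarizing using invariance of $V$ (so that $g_t w \in V(g_t x)$ for any $w \in V(x)$) gives $B^\reals_{g_0 x}(g_0 v,\, g_0 w) = 0$ for every $w \in V(x)$. The step I expect to be the main obstacle is upgrading this \emph{isotropy within} $V$ to the stronger statement $g_0 v \in \operatorname{Ann} B^\reals_{g_0 x}$ (vanishing against the full ambient cohomology); for this I would invoke the Hermitian contraction property of $B^\reals_\omega$ on $H^{1,0}$ established in \cite{Forni:Matheus:Zorich} --- as a norm-$\leq\!1$ operator, its isotropic vectors lie in the full radical --- together with the semisimplicity of the algebraic hull from Theorem~\ref{theorem:KZ:semisimple}(a), which supplies an $SL(2,\reals)$-invariant complement to $V$ along which $B^\reals$ must also vanish. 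Intersecting over $g_0 \in SL(2,\reals)$ then gives $V(x) \subset F(x)$.
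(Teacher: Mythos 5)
The overall structure of your proof matches the paper's: both identify (i) $SL(2,\reals)$-invariance of $F$, (ii) that $F$ is isometric because $B^\reals$ annihilates it, and (iii) that any other isometric invariant subbundle lands in $\operatorname{Ann} B^\reals_{gx}$ for all $g$, hence in $F$. The paper, however, handles (ii) and (iii) in one line each by citing \cite[Lemma 1.9]{Forni:Matheus:Zorich} and \cite[Theorem 2]{Forni:Matheus:Zorich} respectively, whereas you attempt to reprove these from scratch. Your argument for (ii) is essentially the content of FMZ Lemma~1.9 and is fine.

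The problem is in your argument for (iii), exactly at the step you flag as the ``main obstacle.'' Pointwise vanishing of $\|g_tv\|$-derivatives, plus polarization, gives only $B^\reals_{g_0 x}|_{V\times V}=0$; you need $V(g_0 x)\subset\operatorname{Ann}B^\reals_{g_0 x}$, i.e.\ vanishing against \emph{all} of $H^1$. Your proposed fix is flawed in two ways. First, ``as a norm-$\leq\!1$ operator, its isotropic vectors lie in the full radical'' is simply false for a symmetric bilinear form: take the contraction $B(z,w)=\tfrac12(z_1w_2+z_2w_1)$ on $\cx^2$ with the standard norm; $v=(1,0)$ is $B$-isotropic but not in the radical. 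The contraction property of $B_\omega$ (norm exactly $1$, attained on $\omega$) does not convert null vectors into radical vectors. Second, semisimplicity of the algebraic hull gives you an $SL(2,\reals)$-invariant complement $V'$, but there is no reason $B^\reals$ should vanish on $V'$ or on $V\times V'$ --- indeed generically it does not, and without that the decomposition buys you nothing. What FMZ Theorem~2 actually uses is not a pointwise algebraic fact but a dynamical one: Forni's second-variation/curvature formula expresses a sum of Lyapunov exponents of the restricted cocycle as a $\nu$-average of the nonnegative quantity $\operatorname{tr}(H_\omega|_V)$, where $H_\omega$ is the nonnegative Hermitian form built from $B_\omega$. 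Isometry of $V$ forces this average to vanish, hence $H_\omega|_V=0$ $\nu$-a.e., and \emph{that} (not isotropy of $B_\omega$) gives $B_\omega(v,\cdot)=0$ for $v\in V$. So your outline is missing the ergodic averaging step that is the real content of FMZ Theorem~2; the purely pointwise route you propose does not close.
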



\bold{Proof.} Let $F(x)$ be as defined in
(\ref{eq:Forni:analytic:def}). Then, $F$ is an
$SL(2,\reals)$-invariant subbundle of the Hodge bundle, 
and the restriction
of $B_x^\reals$ to $F(x)$ is identically $0$. Consequently, by \cite[Lemma
1.9]{Forni:Matheus:Zorich:Lyapunov}, $F$ is isometric. 

Now suppose $N$ is any other $\nu$-measurable 
isometric $SL(2,\reals)$-invariant subbundle of
the Hodge bundle. Then by \cite[Theorem
2]{Forni:Matheus:Zorich:Lyapunov}, $N(x) \subset \operatorname{Ann}
B_x^\reals$. Since $N$ is $SL(2,\reals)$-invariant, we have $N \subset
F$. Thus $F$ is maximal. 
\qed\medskip

\begin{theorem}
\label{theorem:properties:forni}
On any finite connected cover of $\cH_1(\kappa)$, the following statements hold.
\begin{itemize}
\item[{\rm (a)}] The Forni subspace is symplectic, and its symplectic
  complement $F^\dagger$ coincides with its Hodge complement $F^\perp$. 

\item[{\rm (b)}] Any $SL(2,\reals)$-invariant subbundle of $F^\perp$
  is symplectic, 
  and the restriction of the Kontsevich-Zorich cocycle to
  any $SL(2,\reals)$-invariant subbundle of $F^\perp$ has at least one non-zero
  Lyapunov exponent. 
\end{itemize}
\end{theorem}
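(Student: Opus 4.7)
My plan is to deduce everything from a single structural fact about the Forni subspace: that $F$ is stable under the pointwise Hodge $\ast$-operator $J$ on $H^1(M,\reals)$. This $J$-invariance should follow from the definition of $F$ via $B^\reals$. A short calculation in the Hodge decomposition (writing $\alpha = \alpha^{1,0} + \overline{\alpha^{1,0}}$ and using $J\alpha^{1,0} = i\alpha^{1,0}$) shows $B^\reals(J\alpha,J\beta) = B^\reals(\alpha,\beta)$, hence $\operatorname{Ann} B^\reals$ is pointwise $J$-stable; combined with Theorem~\ref{theorem:forni:subspace:ergodic:def} this should propagate $J$-invariance to $F$ itself. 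The other input is the basic Kähler identity on $H^1(M,\reals)$,
\begin{displaymath}
I(v, Jw) \;=\; \langle v, w\rangle_H,
\end{displaymath}
where $I$ is the intersection form and $\langle\cdot,\cdot\rangle_H$ the Hodge inner product.

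Granting these, part (a) is immediate. If $v \in F^\dagger$ then $I(v,w)=0$ for all $w \in F$, so $\langle v, J^{-1}w\rangle_H = 0$ for all $w \in F$; since $JF = F$, this is equivalent to $v \in F^\perp$. Thus $F^\dagger = F^\perp$. For symplecticity, any $v \in F \cap F^\dagger$ has $Jv \in F$ and $\|v\|_H^2 = I(v, Jv) = 0$, forcing $v = 0$.

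For (b), the nonzero Lyapunov exponent claim is the short half. If $V \subset F^\perp$ is an invariant subbundle on which every Kontsevich-Zorich Lyapunov exponent vanishes, then by \cite[Lemma~1.9, Theorem~2]{Forni:Matheus:Zorich} the restriction of $B^\reals$ to $V$ is zero and $V$ is isometric. Maximality (Theorem~\ref{theorem:forni:subspace:ergodic:def}) then gives $V \subset F$, so by part (a) $V \subset F \cap F^\perp = 0$. Hence any nonzero invariant $V \subset F^\perp$ carries a nonzero Lyapunov exponent.

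The main obstacle is the symplecticity statement in (b). If $V \subset F^\perp$ is invariant and not symplectic, then $V \cap V^\dagger \neq 0$ is an isotropic invariant subbundle of $F^\perp$, and by semisimplicity together with Theorem~\ref{theorem:KZ:semisimple}(b) (passing to a finite cover if needed) one extracts an irreducible isotropic invariant subbundle $W \subset F^\perp$. The difficulty is that $W$ need not itself be $J$-stable, so the clean argument of (a) does not apply to $W$ in isolation. My plan to overcome this is to use semisimplicity a second time — now against the symplectic pairing — to produce an invariant symplectic partner $W' \subset F^\perp$ so that $W \oplus W'$ is invariant and symplectic, and then to restrict the Hodge form to $W \oplus W'$ and apply Forni's vanishing criterion for $B^\reals$ to extract a nonzero isometric invariant sub-subbundle. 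Such a sub-subbundle lies in $F \cap F^\perp = 0$ by part (a), giving the desired contradiction.
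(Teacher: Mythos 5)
The paper does not prove Theorem~\ref{theorem:properties:forni} at all: the ``proof'' is a citation to \cite[Theorem~A.9]{Eskin:Mirzakhani:measures}, which in turn points to \cite{Forni:Matheus:Zorich}. So you are attempting a genuine proof of something the authors delegate, which is a fair thing to try, but two steps in your outline are gaps rather than proofs.

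First, the $J$-stability of $F$. Your pointwise computation for $\operatorname{Ann} B^\reals_x$ is fine in spirit (with a sign you should fix: since $h(J\alpha) = \pm i\, h(\alpha)$ one actually finds $B^\reals_\omega(J\alpha,J\beta) = -B^\reals_\omega(\alpha,\beta)$, which still gives $J_x$-stability of the annihilator). But $F(x) = \bigcap_{g} g^{-1}(\operatorname{Ann} B^\reals_{gx})$, and $J_x$-stability of the $g$-th term requires $g J_x g^{-1}$ to preserve $\operatorname{Ann} B^\reals_{gx}$; what you know is that $J_{gx}$ does, and $g J_x g^{-1} \neq J_{gx}$ in general, because the Kontsevich-Zorich cocycle does \emph{not} intertwine the varying complex structures. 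The maximality characterization in Theorem~\ref{theorem:forni:subspace:ergodic:def} does not rescue this: to conclude $JF\subset F$ from maximality you would need $JF$ to be an $SL(2,\reals)$-invariant isometric subbundle, and $SL(2,\reals)$-invariance of $JF$ is again the intertwining statement. In fact the identity $\langle v,w\rangle_H = I(v,Jw)$ shows $F^\perp = (JF)^\dagger$, so ``$F^\dagger = F^\perp$'' and ``$JF=F$'' are \emph{the same statement}; your reduction of part (a) to $J$-stability of $F$ is a tautology, and the sentence ``this should propagate $J$-invariance to $F$ itself'' is where the entire content of (a) is hidden.

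Second, the symplecticity claim in (b). You correctly reduce to excluding a nonzero irreducible isotropic invariant $W\subset F^\perp$, but the proposed fix---pair $W$ with a symplectic partner $W'$, restrict the Hodge form to $W\oplus W'$, and ``apply Forni's vanishing criterion for $B^\reals$ to extract a nonzero isometric invariant sub-subbundle''---is not an argument. Forni's criterion goes from $B^\reals|_V=0$ to isometry; nothing in your setup tells you that $B^\reals$ vanishes on any invariant subbundle of $W\oplus W'$, and a priori it need not. The actual fact you need is that an isotropic $SL(2,\reals)$-invariant subbundle lies in $F$ (this is \cite[Theorems A.4--A.5]{Eskin:Mirzakhani:measures}, used later in the paper in the proof of Theorem~\ref{theorem:locally:constant:semisimple}); its proof goes through the Forni/Kontsevich variational formula relating the sum of Lyapunov exponents to an integral of $B^\reals$-data, which forces $B^\reals$ to vanish on isotropic pieces. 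That analytic input cannot be replaced by the formal symplectic bookkeeping you propose. The ``short half'' of (b)---that an invariant subbundle of $F^\perp$ with all Lyapunov exponents zero must be trivial---is essentially right, granting the FMZ implication from vanishing exponents to vanishing $B^\reals$; but be explicit about which FMZ statement supplies that implication, since Lemma~1.9 and Theorem~2 by themselves give the two converse directions (``$B^\reals=0$ implies isometric'' and ``isometric implies $B^\reals=0$''), not ``zero exponents implies $B^\reals=0$''.
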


\bold{Proof.} See \cite[Theorem~A.9]{Eskin:Mirzakhani:measures} (the
proof of which consists of references to \cite{Forni:Matheus:Zorich:Lyapunov}). 
\qed\medskip

\bold{Remark.} In view of Theorem~\ref{theorem:properties:forni}, the
Forni subspace corresponds to the maximal compact factor of the
algebraic hull of Kontsevich-Zorich 
cocycle over the action of $SL(2,\reals)$. By definition, all
Lyapunov exponents on the Forni subspace are zero. However, the
(non-$SL(2,\reals)$-invariant) zero-Lyapunov subspace of the
Kontsevich-Zorich cocycle  may be larger, see
\cite{Forni:Matheus:Zorich:Zero} for an example.

\section{The Real Analytic Envelope}
\label{sec:real:analytic:envelope}

In this section, we define a local real-analytic version of the
Zariski closure of the support of an $SL(2,\reals)$-invariant
probability measure. To study the Forni subspace, we must work in the
real-analytic category, since the Forni subspace is a real-analytic
object.

Let $\nu$ be an ergodic $SL(2,\reals)$-invariant probability measure on the
stratum. We break up the stratum into charts, and consider each chart
as a subset of $\cx^n$. 

Let $B(x,\epsilon)$ denote the ball centered at $x$ of radius
$\epsilon$. \mc{in what metric}
Let $N(x,\epsilon)$ be the smallest real-analytic subset (in the sense of
\cite[Definition I.1]{Narasimhan}) of $B(x,\epsilon)$ 
such that $\nu(B(x,\epsilon) \cap N(x,\epsilon)^c) = 0$, where
$N(x,\epsilon)^c$ denotes the complement of $N(x,\epsilon)$. 
Such an $N(x,\epsilon)$ exists by
\cite[Corollary V.2]{Narasimhan}.  Note that if $x$ is disjoint from
the support of $\nu$, $N(x,\epsilon)$ will be the
empty set for any sufficiently small $\epsilon$. 

Let $N$ be a real analytic set. 
A point $y \in N$ is called regular if, near $y$, the set $N$ is a
real-analytic submanifold of $\cx^n$. Let $N_{reg}$ denote the set of
regular points of $N$ and let $N_{sing}$ denote the set of singular
points. 
\par
\begin{lemma}
\label{lemma:N:stabilizes}
There exists $\epsilon_0 = \epsilon_0(x)$, such that the following conditions hold:
\begin{itemize}
\item[i)] For all $\epsilon < \epsilon_0$
\begin{displaymath}
N(x,\epsilon) = N(x,\epsilon_0) \cap B(x,\epsilon). 
\end{displaymath}
\item[ii)] If  $S \subset B(x,\epsilon_0)$ is a  real-analytic subset for which the inclusion 
$S_x \supset N_x(x,\epsilon_0)$ holds on the level of germs, then $S \supset N(x,\epsilon_0)$.
\item[iii)] The real-analytic set $N(x,\epsilon)$ has finitely many irreducible components.
\item[iv)] \cite[Proposition~III.5, p.~39]{Narasimhan} holds for $B(x,\epsilon_0)$, i.e.\
there is a non-zero analytic function $\delta_x$ on  $B(x,\epsilon_0)$, such that
the set of regular points $N_{reg}(x,\epsilon_0)$ contains $\{x: \delta_x(x) \neq 0\}$.
\end{itemize}
\end{lemma}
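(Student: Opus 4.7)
The plan is to combine the Noetherian property of the local ring of real-analytic germs at $x$ with the local irreducible decomposition of real-analytic sets from Narasimhan.

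First I would observe a basic monotonicity: if $\epsilon_1 < \epsilon_2$, then $N(x,\epsilon_2) \cap B(x,\epsilon_1)$ is a real-analytic subset of $B(x,\epsilon_1)$ whose complement is $\nu$-null, so by minimality of $N(x,\epsilon_1)$ we get $N(x,\epsilon_1) \subset N(x,\epsilon_2) \cap B(x,\epsilon_1)$. Passing to germs at $x$, the germs $N_x(x,\epsilon)$ form a weakly decreasing family as $\epsilon$ decreases. The local ring of real-analytic germs at $x$ is Noetherian, so the corresponding ascending chain of vanishing ideals must stabilize; hence there exists $\tilde\epsilon_0 > 0$ with $N_x(x,\epsilon) = N_x(x,\tilde\epsilon_0)$ for every $\epsilon \le \tilde\epsilon_0$.

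Next, I would decompose the stabilized germ into its finitely many irreducible germs (again Noetherianity), choose representatives as irreducible real-analytic sets $N_1,\dots,N_k$ on a fixed small ball, and then shrink to some $\epsilon_0 \le \tilde\epsilon_0$ small enough that (a) each $N_i$ lives globally in $B(x,\epsilon_0)$ and (b) every irreducible component of $N(x,\epsilon_0)$ passes through $x$. Property (b) is the crucial point: any irreducible component of $N(x,\epsilon_0)$ not containing $x$ sits at positive distance from $x$, and once $\epsilon_0$ is smaller than that distance such a component is absent from $B(x,\epsilon_0)$. Minimality of $N(x,\epsilon_0)$, together with the stabilization of the germ, then forces $N(x,\epsilon_0) = N_1 \cup \dots \cup N_k$, which immediately gives (iii).

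With this structure in place, the remaining statements are straightforward. For (i), the inclusion $N(x,\epsilon) \subset N(x,\epsilon_0) \cap B(x,\epsilon)$ is the monotonicity already established; the reverse inclusion follows because both sides are analytic subsets of $B(x,\epsilon)$ with the same stabilized germ at $x$ and with all irreducible components passing through $x$, so they must agree. For (ii), if $S \subset B(x,\epsilon_0)$ is analytic with $S_x \supset N_x(x,\epsilon_0)$, then for each $i$ the intersection $S \cap N_i$ is an analytic subset of $N_i$ containing a neighborhood of $x$ inside $N_i$, and irreducibility of $N_i$ forces $S \cap N_i = N_i$; taking the union gives $S \supset N(x,\epsilon_0)$. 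Finally, (iv) is exactly \cite[Proposition~III.5]{Narasimhan} applied to the real-analytic set $N(x,\epsilon_0) \subset B(x,\epsilon_0)$.

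\textbf{Main obstacle.} I expect the subtle point is the interplay between the Noetherian stabilization (which only controls the germ at $x$) and the minimality condition (which is imposed globally on each ball). Concretely, the hard step is ensuring that $N(x,\epsilon_0)$ has no ``stray'' irreducible components disjoint from $x$. This forces a final shrinking of $\epsilon_0$ below the distance from $x$ to any such extraneous component, and one must check that after this shrinking both the germ stabilization and minimality still cohabit consistently, so that the global structure of $N(x,\epsilon_0)$ is genuinely determined by its germ at $x$.
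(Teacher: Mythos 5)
Your overall strategy---germ stabilization via the Noetherian property of the local ring of real-analytic germs, followed by irreducible decomposition of the stabilized germ---matches the outline of the paper's proof, but the logical order and the key technical input are different, and there is a genuine gap. The paper establishes (ii) directly from the Weierstrass preparation theorem (cited to Narasimhan, Theorem~V.1, p.~98), after a further shrinking of $\epsilon_0$, and only then deduces (iii): each irreducible germ $N_{x,i}$ is represented by an irreducible analytic subset $N_i$, and (ii) applied to $S=\bigcup N_i$ yields $N(x,\epsilon_0)=\bigcup N_i$. You invert this: you try to establish (iii) first, by shrinking $\epsilon_0$ until no component of $N(x,\epsilon_0)$ misses $x$, and then derive (ii) from (iii) by an irreducibility argument.

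The shrinking step is exactly where the argument breaks, and you correctly flag it as the main obstacle without resolving it. As $\epsilon_0$ shrinks, $N(x,\epsilon_0)$ is redefined, and germ stabilization alone does not prevent a new stray component from appearing at every scale: a single irreducible component of $N(x,\tilde\epsilon_0)$ through $x$ may meet the smaller ball $B(x,\epsilon_0)$ in several pieces, some disjoint from $x$, and minimality forces any such piece carrying positive $\nu$-mass to persist in $N(x,\epsilon_0)$. Ruling this out---i.e.\ ensuring the set is eventually globally determined by its germ---is precisely what Narasimhan's Corollary~V.1 (which the paper cites for (i)) and the Weierstrass preparation machinery (which it cites for (ii)) supply; your proposal has no substitute for this input, so both (iii) and the derivation of (ii) from it are left incomplete. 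A smaller point: for (iv), the paper notes that Narasimhan's Proposition~III.5 is stated only for irreducible germs and must be extended to the finite union by taking the product of the functions $\delta_i$ on the components; you cite Proposition~III.5 directly for the possibly reducible $N(x,\epsilon_0)$, eliding that step.
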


\begin{proof} Claim i) follows from \cite[Corollary V.1]{Narasimhan}. 
Claim ii) is a consequence of the Weierstrass preparation theorem, 
proven in \cite[Theorem~V.1, p.~98]{Narasimhan}. We have to shrink $\epsilon_0$
somewhat more to achieve this.
\par
The third
claim is obvious on the level of germs (\cite[Proposition~III.1, p.~32]{Narasimhan}), 
i.e.\ $N_x(x,\epsilon_0) = \cup_{i=1}^k N_{x,i}$ with $N_{x,i}$ irreducible germs. 
On $B(x,\epsilon_0)$ by ii)  there are irreducible analytic subsets $N_i$ with germs $N_{x,i}$.
Moreover, by ii) we have $N(x,\epsilon_0) = \cup_{i=1}^k N_i$. This proves iii). 
\par
The statement iv) is claimed in \cite[Proposition~III.5, p.~39]{Narasimhan}
for irreducible germs. It can obviously be extended to a finite number of 
irreducible components
taking the product of the corresponding functions $\delta_i$ on the
components $N_i$.
\end{proof}

\bold{Remark.} The containment in iv) may be strict; see
e.g. \cite[Example~3, page~106]{Narasimhan}.

\bold{Notation.} Let $\epsilon_0 = \epsilon_0(x)$ be as in
Lemma~\ref{lemma:N:stabilizes}. We denote $B(x,\epsilon_0)$ by $U(x)$
and  $N(x,\epsilon_0) \subset U(x)$ by $N(x)$.
\medskip

The following lemma follows directly from the definition of $N(x)$ and
Lemma~\ref{lemma:N:stabilizes}. 
\begin{lemma}
\label{lemma:zariski:density}
Suppose $f: U(x) \to \reals$ is a real analytic function such that
$f(y) = 0$ for $\nu$-almost-all $y \in N(x)$. Then $f$ is identically
zero on $N(x)$. 
\end{lemma}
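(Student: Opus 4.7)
The plan is to exploit the minimality characterization of $N(x) = N(x,\epsilon_0)$ directly. Since $f$ is real analytic on $U(x) = B(x,\epsilon_0)$, its zero set
\begin{displaymath}
Z := \{ y \in U(x) \st f(y) = 0 \}
\end{displaymath}
is a real analytic subset of $U(x)$. I would then form
\begin{displaymath}
N' := N(x) \cap Z,
\end{displaymath}
which is real analytic in $U(x)$ as the intersection of two real analytic subsets in the sense of \cite[Definition I.1]{Narasimhan}.

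Next I would verify that $N'$ has full $\nu$-measure inside $U(x)$. One decomposes
\begin{displaymath}
U(x) \setminus N' = (U(x) \setminus N(x)) \cup (N(x) \setminus Z).
\end{displaymath}
The first set has $\nu$-measure zero by the very definition of $N(x)$; the second set has $\nu$-measure zero by the hypothesis that $f$ vanishes $\nu$-almost everywhere on $N(x)$. Hence $\nu(U(x) \setminus N') = 0$.

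Now the minimality clause built into the definition of $N(x,\epsilon_0)$ says exactly that any real analytic subset of $U(x)$ whose complement in $U(x)$ is $\nu$-null must contain $N(x)$. Applied to $N'$, this yields $N(x) \subseteq N'$, and since $N' \subseteq N(x)$ by construction we conclude $N' = N(x)$. In particular $N(x) \subseteq Z$, i.e.\ $f \equiv 0$ on $N(x)$.

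The argument is essentially tautological once one observes that the zero set of a real analytic function is a real analytic set, so there is no real obstacle; the only point that deserves a moment's care is that we use the minimality of $N(x)$ in the correct direction, and that the intersection of two real analytic subsets of $U(x)$ is again real analytic in the Narasimhan sense, both of which are immediate.
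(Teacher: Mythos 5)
Your proof is correct and fills in exactly what the paper leaves implicit: the paper asserts that the lemma ``follows directly from the definition of $N(x)$ and Lemma~\ref{lemma:N:stabilizes},'' and your argument spells out precisely how the minimality clause in the definition of $N(x,\epsilon_0)$ applied to the real analytic set $N' = N(x)\cap\{f=0\}$, which has full $\nu$-measure in $U(x)$, forces $N(x)\subseteq\{f=0\}$. This is the intended route and there is nothing to correct.
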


%
%

\begin{corollary}
\label{cor:exist:regular:point}
Let $\epsilon_0$ and $B(x,\epsilon_0)$ be as in Lemma~\ref{lemma:N:stabilizes}. Then, 
\begin{displaymath}
\nu(N(x)  \cap \{\delta_x \neq 0\}) > 0, \quad \text{in particular} \quad
\nu(N(x)_{reg} ) > 0. 
\end{displaymath}
\end{corollary}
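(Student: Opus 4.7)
The plan is to argue by contradiction, invoking Lemma~\ref{lemma:zariski:density} to upgrade $\nu$-almost-everywhere vanishing to pointwise vanishing on $N(x)$, and then to derive the ``in particular'' clause immediately from the defining property of $\delta_x$.

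First, I would suppose for contradiction that $\nu(N(x)\cap\{\delta_x\ne 0\})=0$, which means $\delta_x$ vanishes $\nu$-almost everywhere on $N(x)$. Since $\delta_x$ is a real-analytic function on $U(x)=B(x,\epsilon_0)$, Lemma~\ref{lemma:zariski:density} applies directly (or, if one prefers, to the real and imaginary parts of $\delta_x$ separately), giving that $\delta_x\equiv 0$ on the whole of $N(x)$. I would then contradict this by unpacking the construction of $\delta_x$ in Lemma~\ref{lemma:N:stabilizes} iv): $\delta_x$ is the product of the functions $\delta_i$ supplied by \cite[Proposition~III.5]{Narasimhan} on each of the finitely many irreducible components $N_i$ of $N(x)$. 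Each $\delta_i$ is, by construction, not identically zero on $N_i$ (it is precisely a function detecting the singular locus $(N_i)_{sing}$, which is a proper analytic subset of the irreducible $N_i$). Hence $\delta_x$ is not identically zero on any component $N_i$, and in particular not on $N(x)=\bigcup_{i=1}^k N_i$, contradicting the conclusion of Lemma~\ref{lemma:zariski:density}. This forces $\nu(N(x)\cap\{\delta_x\ne 0\})>0$.

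For the second assertion, I would use the inclusion $\{y\in N(x):\delta_x(y)\ne 0\}\subset N(x)_{reg}$ provided by Lemma~\ref{lemma:N:stabilizes} iv), which immediately yields
\[
\nu(N(x)_{reg})\;\ge\;\nu(N(x)\cap\{\delta_x\ne 0\})\;>\;0.
\]
The only real step is the contradiction argument in the first paragraph; the main (minor) obstacle is ensuring $\delta_x$ is non-zero on $N(x)$ rather than merely on the ambient ball $U(x)$, and this is handled by the irreducible-component decomposition and the fact that each $\delta_i$ from Narasimhan detects a \emph{proper} singular subset of its irreducible component.
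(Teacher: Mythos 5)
Your proof is correct and takes essentially the same approach as the paper's: the paper argues directly that $\nu$ being supported on the proper real-analytic set $N(x)\cap\{\delta_x=0\}$ would contradict the minimality in the definition of $N(x)$, while you route the same contradiction through Lemma~\ref{lemma:zariski:density}, which is itself a repackaging of that minimality. The ``in particular'' clause is handled identically in both via the inclusion $N(x)\cap\{\delta_x\ne 0\}\subset N(x)_{reg}$ from Lemma~\ref{lemma:N:stabilizes}~iv).
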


\begin{proof} Suppose not. Then, $\nu$ is supported on
 a proper real analytic subset of $N(x)$, which contradicts
Lemma~\ref{lemma:N:stabilizes} and the definition of $N(x)$. 
\end{proof}

\begin{lemma}
\label{lemma:N:SL2:invariant}
The sets $N(x)$, $N(x)_{reg}$ and $N(x)_{sing}$ are
$SL(2,\reals)$-equi\-variant in the following sense: 
suppose  that $g \in
SL(2,\reals)$, and let $U(g,x) = U(gx) \cap g U(x)$. Then $U(g,x)$ is
an open neighborhood of $gx$, and on $U(g,x)$ we have
\begin{displaymath}
N(g x) = g N(x), \quad N(g x)_{reg} = g N(x)_{reg}, \quad N(g x)_{sing} =
g N(x)_{sing}. 
\end{displaymath}
 \end{lemma}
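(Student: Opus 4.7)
The plan is to exploit that the $SL(2,\reals)$-action on period coordinates is real-linear --- $g$ acts by left multiplication on the $2\times n$ period matrix --- and hence a real-analytic diffeomorphism of $\cx^n$ onto itself, which sends real-analytic subsets to real-analytic subsets and preserves their regular/singular stratifications. The openness of $U(g,x)$ and the fact that it contains $gx$ are immediate, since both $U(gx)$ and $gU(x)$ are open neighborhoods of $gx$.

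The core of the argument is to establish $gN(x) \cap U(g,x) = N(gx) \cap U(g,x)$ by a minimality argument. Fix any ball $B(gx,\epsilon) \subset U(g,x)$ with $\epsilon < \epsilon_0(gx)$; then $g^{-1}B(gx,\epsilon) \subset U(x) = B(x,\epsilon_0(x))$, and $gN(x) \cap B(gx,\epsilon)$ is a real-analytic subset of $B(gx,\epsilon)$. Using $SL(2,\reals)$-invariance of $\nu$,
\[
\nu\bigl(B(gx,\epsilon) \setminus gN(x)\bigr) \;=\; \nu\bigl(g^{-1}B(gx,\epsilon) \setminus N(x)\bigr) \;\le\; \nu\bigl(B(x,\epsilon_0(x)) \setminus N(x)\bigr) \;=\; 0.
\]
By Lemma~\ref{lemma:N:stabilizes}(i), $N(gx,\epsilon) = N(gx) \cap B(gx,\epsilon)$ is the smallest real-analytic subset of $B(gx,\epsilon)$ of full $\nu$-measure, so $N(gx) \cap B(gx,\epsilon) \subset gN(x)$. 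Taking the union over such balls covering $U(g,x)$ gives $N(gx) \cap U(g,x) \subset gN(x) \cap U(g,x)$; the reverse inclusion follows by running the same argument with $g$ and $x$ replaced by $g^{-1}$ and $gx$, and then applying $g$.

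For the regular and singular parts, I would invoke that $g$ restricts to a real-analytic diffeomorphism of $U(x)$ onto $gU(x)$. Regularity of a point of an analytic set is a local condition (existence of a neighborhood on which the set is a real-analytic submanifold), preserved by such diffeomorphisms. Hence $g$ bijects the regular (resp.\ singular) points of $N(x)$ with those of $gN(x)$, and combined with the equality $gN(x) = N(gx)$ on $U(g,x)$ established above, this yields the asserted equivariance. I do not expect any genuine obstacle; the only care required is the reduction from the open set $U(g,x)$ to balls small enough for Lemma~\ref{lemma:N:stabilizes}(i) to apply in the minimality step.
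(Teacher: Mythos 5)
Your proof is correct and follows essentially the same approach as the paper's: deduce $N(gx) = gN(x)$ from $g$-invariance of $\nu$ and the minimality in the definition of $N$, then transfer the regular/singular decomposition across the (real-analytic) action of $g$. The paper states this more tersely; you have usefully spelled out the reduction to small balls where Lemma~\ref{lemma:N:stabilizes}(i) applies and the measure-zero comparison, but the argument is the same.
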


\bold{Proof.} Since $\nu$ is $g$-invariant, we have $N(gx) = g N(x)$
on $U(g,x)$ by the definition of $N(x)$. 
Since the $SL(2,\reals)$ action is smooth, the same argument shows
that $N(gx)_{reg} = g N(x)_{reg}$. The final assertion follows from
the fact that $N(x)_{sing} = N(x) \setminus N(x)_{reg}$. \mc{check}
\qed\medskip

In view of Lemma~\ref{lemma:N:SL2:invariant}, the function 
$\dim N(x)$ is $SL(2,\reals)$-invariant and $\nu$-measurable on the
stratum. Therefore, since $\nu$ is assumed to be ergodic, there exists
a set $\Phi$ with $\nu(\Phi) = 1$ and $d \in \natls$ such that $\dim
N(x) = d$ for all $x \in \Phi$.

\begin{lemma}
\label{lemma:subset:full:dimension}
Suppose $N \subset U$ is the intersection of a real-analytic subset of
$\cx^n$ with an open set 
$U \subset \cx^n$,  $y \in N_{reg}$, $V \subset U$ is a neighborhood
of $y$ satisfying Lemma~\ref{lemma:N:stabilizes}, 
and $N'$ is a real-analytic set such that $N' \cap V \subset N
\cap V$ and $\dim N' = \dim N$. Then, $N' \cap V = N \cap V$. 
\end{lemma}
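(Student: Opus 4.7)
The plan is to use the regular point $y$ to give $N$ the structure of a connected real-analytic manifold near $y$, to find a common regular point of $N$ and $N'$ where one is locally contained in the other with equal dimension (hence where they coincide), and then to propagate the coincidence to all of $V$ by the identity theorem for real-analytic functions.

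First, I would exploit $y \in N_{reg}$: by definition there is a ball around $y$ in which $N$ is a real-analytic submanifold of dimension $d = \dim N$. Combined with Lemma~\ref{lemma:N:stabilizes}(iii), only the unique irreducible component $N_1$ of $N\cap V$ passes through $y$, and after shrinking $V$ (still satisfying Lemma~\ref{lemma:N:stabilizes}) we may assume $N\cap V = N_1\cap V$ is irreducible with $(N\cap V)_{reg}$ a dense, open, $d$-dimensional connected submanifold of $V$, via Lemma~\ref{lemma:N:stabilizes}(iv). Second, the hypothesis $\dim N' = d$ together with $N'\cap V \subset N\cap V$ forces $N'\cap V$ to carry a top-dimensional stratum inside $N\cap V$; intersecting the regular loci of both $N$ and $N'$ (each open and dense in their respective sets) produces a point $z \in V$ at which $N$ and $N'$ are both real-analytic submanifolds of dimension $d$ with $N' \subset N$ locally. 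Third, at such a $z$, one inclusion of equal-dimensional manifolds forces equality, so $N'\cap W_z = N\cap W_z$ on some neighborhood $W_z$ of $z$. Finally, the local defining equations of $N'$ are real-analytic functions on $V$ that vanish on the open subset $N\cap W_z$ of the connected real-analytic manifold $(N\cap V)_{reg}$; by the identity theorem for real-analytic functions on a connected manifold they vanish on all of $(N\cap V)_{reg}$, hence on its closure $N\cap V$, yielding $N\cap V \subset N'\cap V$ and therefore equality.

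The main obstacle is the propagation step: one needs the regular locus of $N\cap V$ to be \emph{connected} for the identity theorem to carry the coincidence from $W_z$ to all of $N\cap V$. This is exactly what the reduction to the single germ through the regular point $y$ guarantees, once one combines Lemma~\ref{lemma:N:stabilizes}(iii) (finitely many irreducible components) with Lemma~\ref{lemma:N:stabilizes}(iv) (generic regularity inside each component) and shrinks $V$ so that only the component through $y$ remains; without the regularity of $y$, multiple top-dimensional components of $N$ of the same dimension could exist, and an $N'$ containing only some of them would satisfy the hypotheses but violate the conclusion, so the role of $y \in N_{reg}$ in eliminating this possibility is precisely the content of the lemma.
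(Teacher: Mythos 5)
Your proof takes a genuinely different route from the paper's, which is essentially two lines: the paper cites \cite[Proposition~7, p.~41]{Narasimhan} (an irreducible germ of a real-analytic set cannot properly contain a germ of the same dimension, and the germ at a regular point is irreducible) to obtain the germ equality $N'_y = N_y$, and then applies Lemma~\ref{lemma:N:stabilizes}(ii) --- arranged precisely for this purpose --- to promote equality of germs at $y$ to equality of the sets on all of $V$. You instead reprove the germ-level statement by hand (locate a common regular point $z$ and equate the two $d$-manifolds there) and then propagate by the identity theorem across the connected regular locus. The idea is sound and your final paragraph correctly identifies connectedness of the regular stratum through $y$ as the crux, but the route is longer and re-derives machinery the paper prepackaged.

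Two points need repair. First, you shrink $V$ to force irreducibility; the identity-theorem argument then yields $N'\cap V'=N\cap V'$ only on the \emph{shrunken} neighborhood $V'$, not on the $V$ of the statement. Passing back from $V'$ to $V$ requires exactly Lemma~\ref{lemma:N:stabilizes}(ii) (germ inclusion at $y$ implies inclusion on $V$), which you never invoke. In fact, if $V$ satisfies Lemma~\ref{lemma:N:stabilizes} centered at $y$, then part~(iii) together with irreducibility of the germ at the regular point $y$ already makes $N\cap V$ irreducible, so no shrinking is needed; and at the end one should in any case cite part~(ii) rather than hope the identity theorem alone reaches the boundary of $V$. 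Second, the phrase ``the local defining equations of $N'$ are real-analytic functions on $V$'' is imprecise: a real-analytic set is cut out by finitely many real-analytic functions only locally, so the propagation must be structured as an open-and-closed argument on the connected manifold $(N\cap V)_{reg}$ (openness at a point $p$ using local equations near $p$ and the identity theorem on a small ball, closedness because $N'$ is closed), not as a single application of the identity theorem to globally defined functions. Neither flaw is fatal, but both are exactly the subtleties that Lemma~\ref{lemma:N:stabilizes}(ii) was set up to absorb.
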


\begin{proof} On the level of germs at $y$ this is precisely \cite[Proposition~7, p.~41] {Narasimhan}. By our choice of neighborhoods according to 
Lemma~\ref{lemma:N:stabilizes} ii), an equality of the germs implies
equality of the analytic sets that induce the germs.
\end{proof}

\begin{proposition}
\label{prop:good:regular}
For $\nu$-almost all $y$ in the stratum, we have $y \in N(y)_{reg}$. 
\end{proposition}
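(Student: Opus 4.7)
The plan is to pass to the measurable $SL(2,\reals)$-invariant set
\[
G \;=\; \{\,y : y \in N(y)_{reg}\,\}
\]
and show $\nu(G)=1$ by ergodicity. Invariance is immediate from Lemma~\ref{lemma:N:SL2:invariant}: on the neighborhood $U(g,y)$ of $gy$ one has $N(gy)_{reg}=g\,N(y)_{reg}$, so $y\in N(y)_{reg}$ if and only if $gy\in N(gy)_{reg}$. Since $\nu$ is ergodic, it suffices to show $\nu(G)>0$, after which ergodicity upgrades this to $\nu(G)=1$.

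To produce positive measure, I would fix a single base point $y_0 \in \Phi$, so $\dim N(y_0) = d$, and apply Corollary~\ref{cor:exist:regular:point} to obtain $\nu(N(y_0)_{reg})>0$. The key claim is then the inclusion
\[
N(y_0)_{reg}\cap \Phi \;\subset\; G,
\]
which combined with $\nu(\Phi)=1$ yields $\nu(G)\geq \nu(N(y_0)_{reg})>0$.

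To verify the inclusion, take any $z \in N(y_0)_{reg}\cap\Phi$. Since $z$ lies in the open set $U(y_0)$, I would choose $\epsilon>0$ small enough that $B(z,\epsilon)\subset U(y_0)$ and $\epsilon<\epsilon_0(z)$. By Lemma~\ref{lemma:N:stabilizes}(i), $N(z,\epsilon)=N(z)\cap B(z,\epsilon)$ is the smallest real-analytic subset of $B(z,\epsilon)$ carrying almost all the $\nu$-mass of $B(z,\epsilon)$. But $N(y_0)\cap B(z,\epsilon)$ is another such real-analytic subset, so minimality gives
\[
N(z)\cap B(z,\epsilon) \;\subset\; N(y_0)\cap B(z,\epsilon).
\]
Both sides have dimension $d$ (because $y_0, z\in\Phi$), and $z$ is a regular point of $N(y_0)$. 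Lemma~\ref{lemma:subset:full:dimension}, applied with $N=N(y_0)\cap B(z,\epsilon)$ and $N'=N(z)\cap B(z,\epsilon)$ on a still smaller neighborhood of $z$ satisfying Lemma~\ref{lemma:N:stabilizes}, then forces equality of $N$ and $N'$ near $z$. Since $z$ is regular in $N(y_0)$ and the two sets agree in a neighborhood of $z$, $z$ is also regular in $N(z)$, i.e.\ $z\in G$.

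The only delicate bookkeeping is keeping track of the three nested neighborhoods: $B(z,\epsilon)$ must sit inside $U(y_0)$ so that $N(y_0)$ still carries full $\nu$-measure there, $\epsilon$ must be less than $\epsilon_0(z)$ so that Lemma~\ref{lemma:N:stabilizes}(i) identifies $N(z,\epsilon)$ with $N(z)\cap B(z,\epsilon)$, and a further shrinking is needed to apply Lemma~\ref{lemma:subset:full:dimension}. This compatibility of radii is the main (though mild) obstacle; once it is set up, minimality plus the dimension-preserving inclusion lemma do all the real work.
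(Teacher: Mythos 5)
Your proof is correct and follows essentially the same route as the paper: define the $SL(2,\reals)$-invariant set $G$, reduce by ergodicity to showing $\nu(G)>0$, use Corollary~\ref{cor:exist:regular:point} to get a positive-measure set of regular points of a fixed envelope, and then use minimality plus Lemma~\ref{lemma:subset:full:dimension} to identify $N(z)$ with $N(y_0)$ near such a point $z$. The one small point you (like the paper) leave implicit is that $y_0$ and the points $z$ must additionally lie in $\operatorname{supp}\nu$ so that $N(y_0)$ and $N(z)$ are nonempty; this is a further full-measure restriction and does not affect the argument.
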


\begin{proof} 
Let 
\begin{displaymath}
E = \{ y \st y \in N(y)_{reg} \}.
\end{displaymath}
By Lemma~\ref{lemma:N:SL2:invariant} the set $E$ is
$SL(2,\reals)$-invariant. Therefore, by ergodicity,  
it is enough to show that $\nu(E) > 0$. We start with some $y_0$
such that $N(y_0)$ is not empty.
\par
Choose an arbitrary $x \in N(y_0) \cap \{\delta_{y_0} \neq 0\} \cap \Phi$. 
By Corollary~\ref{cor:exist:regular:point}, for the
neighborhood $U(x)$ of $x$ we know that $\nu(N(x)_{reg}) > 0$. 
Therefore
\begin{equation}
\label{eq:mu:Nx:reg:cap:Ux:cap:Phi}
\nu(N(x)_{reg} \cap \Phi) > 0. 
\end{equation}
Suppose $y \in N(x) \cap \{\delta_x \neq 0\}  \cap \Phi$. Choose a neighborhood
$V$ of $y$ with $V \subset U(y) \cap U(x)$. Then, by
Lemma~\ref{lemma:N:stabilizes} i) we have $N(y) \cap V \subset N(x) \cap
V$. Also since
both $x \in \Phi$ and $y \in \Phi$, we have $\dim N(x) = \dim
N(y)$. Therefore by Lemma~\ref{lemma:subset:full:dimension}, we have
$N(y)\cap V = N(x) \cap V$. Hence we may take $\delta_x = \delta_y$
and conclude 
$$N(y)\cap \{\delta_y \neq 0\}  \cap V = N(x) \cap  \{\delta_x \neq 0\}\cap V$$
as well as $N(y)_{reg} \cap V = N(x)_{reg} \cap
V$ . Since $y$ was assumed to be in $N(x) \cap \{\delta_y \neq 0\}$, we have $y \in
N(y) \cap \{\delta_y \neq 0\}$. Thus, $y \in E$. We have shown that 
\begin{displaymath}
N(x) \cap \{\delta_x \neq 0\} \cap \Phi \subset E. 
\end{displaymath}
Therefore, by (\ref{eq:mu:Nx:reg:cap:Ux:cap:Phi}), $\nu(E) > 0$. 
\end{proof}

\begin{proposition}
\label{prop:N:linear}
For $\nu$-almost all $x$, $N(x) \subset U(x)$ is affine. 
In particular, $TN(x)$ is preserved by the complex structure $J$. 
\end{proposition}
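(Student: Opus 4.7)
The plan is to show that at $\nu$-a.e.\ regular $y$, the tangent space $V := T_y N(y)$ is a fixed complex-linear subspace $W \otimes \reals^2 \subset \cx^n = \reals^{2n}$, and then use $SL(2,\reals)$-ergodicity and the minimality of $N(y_0)$ to conclude that $N(y_0)$ coincides locally with the affine translate $y_0 + W \otimes \reals^2$.

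First I would pick $y$ in a set of full $\nu$-measure on which $y \in N(y)_{reg}$ (by Proposition~\ref{prop:good:regular}) and on which Poincar\'e recurrence holds simultaneously along each of the three one-parameter subgroups of $SL(2,\reals)$ --- the diagonal flow $g_t$, the upper unipotent flow $u_s$, and the lower unipotent flow $u^T_s$ --- each of which preserves the probability measure $\nu_1$ on $\cH_1(\kappa)$. At such $y$, the Gauss map $z \mapsto T_z N(y) \subset \reals^{2n}$ is real-analytic on $N(y)_{reg}$, and Lemma~\ref{lemma:N:SL2:invariant} combined with the linearity of the $SL(2,\reals)$-action in period coordinates gives $T_{gz}N(gz) = g \cdot T_zN(y)$ (origin translated).

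The main step would be a Grassmannian limit computation. For each of the three flows, the orbit $\tau \mapsto \tau \cdot V$ is a rational curve in $\mathrm{Gr}_d(\reals^{2n})$ that extends over $\proj^1$ and hence has a well-defined limit at infinity; along a Poincar\'e return sequence $g_{t_n}y \to y$ with $t_n \to \infty$, continuity of the Gauss map and the equivariance force $g_{t_n} V \to V$, and so by uniqueness of limits $\lim_{t\to\infty} g_t V = V$. Splitting $\reals^{2n} = \reals^n_x \oplus \reals^n_y$, a direct computation shows $\lim_{t\to\infty} g_t V = \pi_x(V) \oplus (V \cap \reals^n_y)$, and equality with $V$ forces the split decomposition $V = (V \cap \reals^n_x) \oplus (V \cap \reals^n_y)$. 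Analogous computations for $u_s$ and $u^T_s$ yield $V \cap \reals^n_y \subset V \cap \reals^n_x$ and $V \cap \reals^n_x \subset V \cap \reals^n_y$ respectively (as subspaces of $\reals^n$), so setting $W_y := V \cap \reals^n_x = V \cap \reals^n_y$ gives $V = W_y \otimes \reals^2$, a complex-linear (in particular $J$-invariant) subspace.

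Finally I would deduce that $N(y_0)$ is affine as follows. The equivariance $V_{gy} = gV_y$ forces $W_{gy} = W_y$, so by ergodicity of $\nu$ the subspace $W_y$ equals $\nu$-a.e.\ a fixed $W \subset \reals^n$; write $H := W \otimes \reals^2$. For $\nu$-typical $y_0$, the orbit tangent $\mathfrak{sl}_2(\reals) \cdot y_0$ lies in $T_{y_0}N(y_0) = H$, and combined with the $\mathfrak{sl}_2$-invariance of $H$ one gets $X^k y_0 \in H$ for all $k \geq 1$ and $X \in \mathfrak{sl}_2(\reals)$, hence $\exp(X) y_0 \in y_0 + H$ and by composition $SL(2,\reals) \cdot y_0 \subset y_0 + H$. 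The affine plane $A := y_0 + H$ is therefore $SL(2,\reals)$-invariant in $\reals^{2n}$, so ergodicity gives $\nu(A) = 1$ and hence $\mathrm{supp}(\nu) \subset A$; restricting to $U(y_0)$ and using minimality of $N(y_0)$ yields $N(y_0) \subset A \cap U(y_0)$. Since $N(y_0)$ is a $d$-dimensional real-analytic submanifold through $y_0$ contained in the $d$-dimensional affine plane $A \cap U(y_0)$, equality holds, so $N(y_0) \subset U(y_0)$ is affine and $TN(y_0)$ is $J$-invariant. The main obstacle is the explicit Grassmannian limit computations for $u_s$ and $u^T_s$, particularly the degenerate cases where the leading Pl\"ucker coefficient vanishes; once these are handled, the rest is essentially bookkeeping.
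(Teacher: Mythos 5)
Your proposal is a genuinely different route from the paper's own proof, which is only an outline that defers to the ``second,'' ``third,'' and ``fourth'' steps of Avila--Gou\"ezel, using Lemma~\ref{lemma:zariski:density} to upgrade the conclusion from $\nu$-a.e.\ $y$ to all $y \in N(x)$. Your attempt at a self-contained Grassmannian/recurrence argument is natural, but it has a gap that I think is fatal as written.

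The problem is in the equivariance you invoke. You write that Lemma~\ref{lemma:N:SL2:invariant} together with linearity of the action gives $T_{gz}N(gz) = g\cdot T_zN(y)$ with $g$ acting purely as the $2\times 2$ matrix on $\reals^2\otimes\reals^n$. That is only correct while $gz$ stays inside the same period-coordinate chart. Lemma~\ref{lemma:N:SL2:invariant} is stated on $U(g,x)=U(gx)\cap gU(x)$, i.e.\ on the overlap of two small balls; once you push with $g_{t_n}$ for $t_n\to\infty$ and return by Poincar\'e recurrence, you have left the chart and re-entered it, and the correct relation between the two charts involves the Kontsevich--Zorich cocycle. Concretely, the action in the fundamental domain is $x\mapsto g\,x\,A(g,x)$ with $A(g,x)\in\Sp(2g,\zed)\ltimes\reals^k$, so the germ equivariance along a recurrence sequence reads $g_{t_n}\,V\,A(g_{t_n},y)\to V$, not $g_{t_n}V\to V$. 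The operators $A(g_{t_n},y)$ are unbounded (they carry the Lyapunov exponents of the KZ cocycle), so there is no way to discard them; the Grassmannian limit you compute, $\pi_x(V)\oplus(V\cap\reals^n_y)$, is the limit for $\diag(e^t,e^{-t})$ alone and does not describe the actual return dynamics. The same issue affects the unipotent computations. This is exactly where the real content of Avila--Gou\"ezel enters: one has to run an Oseledets-type argument for the \emph{combined} cocycle $g_t\otimes A(g_t,\cdot)$ and use the crucial input that the nontautological KZ exponents lie strictly in $(-1,1)$, which is what eventually forces the split, $J$-invariant structure.

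A quick sanity check confirms something is off: your final step would give, via $W_{gy}=W_y$ and ergodicity, a \emph{fixed} subspace $W\subset\reals^n$ with $T_\reals N = W$ $\nu$-a.e.\ in a single global trivialization. But the tangent bundle of an affine invariant submanifold is generally a nontrivial flat bundle --- already for the full stratum, $T_\reals = H^1(M,\Sigma,\reals)$ carries a monodromy representation with large image --- so the tangent space cannot be globally constant. What you actually want is local constancy within a chart, which is precisely the affine statement; but your recurrence argument, by ignoring $A(g,\cdot)$, implicitly proves the stronger (false) global statement, a sign that the cocycle cannot be suppressed. To repair the argument you would need to replace the naive $2\times 2$ limit by the Oseledets decomposition of the full lifted cocycle and then argue as in Avila--Gou\"ezel, which is essentially the route the paper's outline follows.
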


\begin{proof}[{Outline of Proof}] The relevant parts of the argument
  labelled ``Proof of
  Propositions 4.1 and 2.2'' in \cite[\S{4}]{Avila:Gouezel} apply.  
The ``second step'' and ``third step'' can be done for almost all $y
\in N(x)$. Then by Lemma~\ref{lemma:zariski:density}, the conclusion
of the ``third step'' holds for all $y \in N(x)$. Then the ``fourth
step'' of the ``Proof of
  Propositions 4.1 and 2.2'' proceeds exactly as in \cite[\S{4}]{Avila:Gouezel}, and this shows that
$N(x)$ is affine.
\end{proof}

The above argument also proves the following:
Let $Y$ be a real-analytic space supporting an $SL(2,\reals)$-action. 
Suppose $F: \cN \to Y$ is a (locally defined) real-analytic function,
in the sense that for each neighborhood $N(x)$, the restriction of $F$
to $N(x)$ is real-analytic.  Such a function $F$ is
$SL(2,\reals)$-equivariant, if it invariant in in the sense of
Lemma~\ref{lemma:N:SL2:invariant}, i.e. that for $g \in SL(2,\reals)$
and $x$ in the support of $\nu$, for $y \in N(gx) \cap g N(x)$ we have
$F(gy) = g F(y)$. Finally, we let 
$$\cL(x) = \{ y \in N(x) \st F(y) = F(x) \}$$ 
be the level set of $F$. 
\par
\begin{proposition}
\label{prop:cL:linear}  Suppose  $F: \cN \to Y$ is a real-analytic 
$SL(2,\reals)$-equivariant function. In addition, suppose that for 
$\nu$-almost all $y \in N(x)$, $y$ is a
regular point of $F$. 
Then, for
$\nu$-almost all $x$, the level set $\cL(x)$ through $x$ 
is affine, and $T\cL(x)$ is preserved by the complex structure $J$.  
\end{proposition}

\section{The Forni subspace revisited.}
\label{sec:forni:revisited}
Everything in this 
section is a local statement about the intersection of the tangent
space to the real analytic envelope of $\nu$ and the Forni subspace. 
Since everything is local (say around $x$) we may assume by Proposition~\ref{prop:N:linear}
that the real analytic envelope $N=N(x)$ of $\nu$ is affine.

Since the form $B_x^\reals(\cdot, \cdot)$ depends real-analytically on
$x$, we may (for $\nu$-almost all $x \in \cH_1(\kappa)$)
shrink $U(x)$ so that $\dim F(y)$ stays constant for all
$y \in N(x)$. Then $F(y)$ depends real-analytically on $y \in N(x)$.

We have the following:
\begin{lemma}
\label{lemma:properties:Forni}
For $\nu$-almost all $x$ there exists a neighborhood $U(x)$ such that
for all $y \in N(x)$ the following hold:
\begin{itemize}
\item[{\rm (a)}] The subspace $F^\perp(y)$ defined as the orthogonal
  complement of $F(y)$ using the Hodge inner product is
  $SL(2,\reals)$-invariant.
\item[{\rm (b)}] For $v \in F(y)$, and $w \in F^\perp(y)$, $\langle v
  , w \rangle = 0$, where $\langle \cdot, \cdot \rangle$ denotes the
  symplectic form. 
\item[{\rm (c)}] If $y = a+bi$, then the space spanned by $a$ and $b$ is
  contained in $F^\perp(y)$.
\item[{\rm (d)}] The restriction of the symplectic form to $F(y)$ is
  non-degenerate.
\end{itemize}
\end{lemma}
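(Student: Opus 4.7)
Parts (a), (b), and (d) are immediate corollaries of Theorem~\ref{theorem:properties:forni}(a), which asserts that $F$ is symplectic and that its Hodge-orthogonal complement $F^\perp$ coincides with its symplectic complement $F^\dagger$. Concretely, (d) is the symplectic non-degeneracy on $F$; (b) is then the defining property of the symplectic complement, since $F^\perp = F^\dagger$; and (a) follows because the Kontsevich-Zorich cocycle preserves the intersection form, so $F^\dagger$ is automatically $SL(2,\reals)$-invariant whenever $F$ is, and this invariance transfers back to $F^\perp$ via the identification $F^\perp = F^\dagger$.

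For (c), the plan is to compare Lyapunov exponents. Under the Teichm\"uller flow $g_t = \diag(e^t, e^{-t})$, the tautological $2$-plane
$T(y) = \operatorname{span}_\reals\bigl(\Re[\omega_y],\, \Im[\omega_y]\bigr) \subset H^1(M,\reals)$
is an $SL(2,\reals)$-invariant subbundle on which the Kontsevich-Zorich cocycle has Lyapunov exponents exactly $+1$ and $-1$. By Theorem~\ref{theorem:forni:subspace:ergodic:def}, the cocycle acts as a Hodge isometry on $F$, so every Lyapunov exponent on $F$ vanishes. Consequently, in the Oseledec splitting $H^1(M,\reals) = E^+(y) \oplus E^0(y) \oplus E^-(y)$ for the $g_t$-flow, one has $F(y) \subset E^0(y)$ and $T(y) \subset E^+(y) \oplus E^-(y)$ for $\nu$-almost every $y$. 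The standard Hodge-theoretic bound $|\langle \alpha,\beta\rangle| \le \|\alpha\| \cdot \|\beta\|$ for harmonic representatives (valid because the Hodge star is an isometry that intertwines the Hodge and intersection pairings), combined with cocycle-invariance of the intersection form, forces $E^0$ to be symplectically orthogonal to $E^+ \oplus E^-$: for $v \in E^0$ and $w \in E^\pm$, one has $\langle v,w\rangle = \langle g_t v,\, g_t w\rangle$, and sending $t \to \mp\infty$ yields $\langle v,w\rangle = 0$ since $\|g_t v\|$ stays bounded while $\|g_t w\| \to 0$. Therefore $T(y) \subset F^\dagger(y) = F^\perp(y)$ for $\nu$-almost every $y$.

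The main technical obstacle is upgrading this conclusion from $\nu$-almost every $y$ to every $y$ in the neighborhood $U(x)$, as demanded by the lemma. The opening paragraph of this section already ensures that, after possibly shrinking $U(x)$, $\dim F(y)$ is constant on $U(x)$, so that $F(y)$, $F^\perp(y)$, and $T(y)$ all depend real-analytically on $y$. Consequently the condition ``$T(y) \subset F^\perp(y)$'' is the vanishing of certain real-analytic functions on $U(x)$ (namely the Hodge pairings of a real-analytic frame of $T(y)$ against one of $F(y)$), and since these pairings vanish on a set of positive $\nu$-measure, Lemma~\ref{lemma:zariski:density} extends the inclusion to all of $N(x) \cap U(x)$, which is the content needed in the applications of the lemma. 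The same real-analytic continuation argument takes care of the pointwise statements in (a), (b), (d), once Theorem~\ref{theorem:properties:forni} is applied on the generic fiber.
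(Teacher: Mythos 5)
Your proof follows the same outline as the paper's: derive (a), (b), (d) from Theorem~\ref{theorem:properties:forni} on the support of $\nu$, treat (c) separately, and extend everything off the support by real-analyticity via Lemma~\ref{lemma:zariski:density}. The paper dismisses (c) with the one-line remark that $F$ must be symplectically orthogonal to the $SL(2,\reals)$-orbit, whereas you supply the Lyapunov-exponent justification (with the minor caveat that $\|g_t v\|$ for $v\in E^0$ is merely subexponential rather than bounded, which still suffices); you also correctly observe that the analytic continuation reaches only $N(x)\cap U(x)$, not all of $U(x)$, which is in fact the set used in the sequel.
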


\begin{proof} Since $F(y)$ depends real-analytically on $y$ and the Hodge
inner product does, also  $F^\perp(y)$ depends
real-analytically on $y$. Therefore, by
Lemma~\ref{lemma:zariski:density} it is enough to show (a)-(d) for $y$
in the support of $\nu$. The statements (a), (b), (d) follow
immediately from Theorem~\ref{theorem:properties:forni}. To prove (c),
we claim that the tautological subbundle (the one spanned by $a$ and
$b$) is symplectically orthogonal to $F$ on the support of $\nu$. 

Indeed, $a$ spans the Lyapunov subspace $E_{-1}(y)$ of the cocycle $A(y,t)$
corresponding to the Lyapunov exponent $-1$. Since $A(y,t)$ preserves
the symplectic structure, and the Hodge norm of any vector $v \in F$
is preserved by the cocycle, one gets
\begin{displaymath}
\langle v, a\rangle = \langle A(y,t) v, A(y,t) a \rangle \to 0
\end{displaymath}
as $t \to \infty$. Therefore $\langle v, a\rangle = 0$. By a similar
argument involving taking a limit as $t \to -\infty$,  
we get $\langle v, b\rangle = 0$. 
\end{proof}

\section{A connection on the unstable leaf.}
\label{sec:connection}

\bold{Period Coordinates and the geodesic flow.}
Locally around a point $x=(M,\omega) \in \cH_1(\kappa)$ 
we identify the tangent space to the stratum with 
$$H^1(M,\Sigma,\cx) = H^1(M,\Sigma,\reals) \otimes_\reals \cx. $$
In this identification $SL(2,\reals)$ acts on $\cx \cong \reals^2$.
Let $g_t = \left(\begin{smallmatrix} e^t & 0 \\ 0 & e^{-t} \\ \end{smallmatrix}
\right)$ denote the geodesic flow.

The condition $area(M)=1$ defining the ``unit hyperboloid''
$\cH_1(\kappa)$ in cohomological coordinates
is equivalent to $\langle p([\omega]),p([\bar\omega])\rangle=-2i$,
where $\langle\cdot,\cdot\rangle$ is the symplectic product, and
$p([\omega])\in H^1(M;\cx)$ is the absolute cohomology class of the
holomorphic 1-form defining the flat structure. Differentiating,
we get the following equation for the vectors $\alpha$ in
the tangent space considered as a real
hyperplane in $H^1(M,\Sigma;\cx)$:
$$
\langle p(\alpha),\overline{p([\omega])}\rangle +
\langle p([\omega]),\overline{p(\alpha)}\rangle = 0\,,
$$
where  $p:H^1(M,\Sigma;\cx)\to  H^1(M;\cx)$  is the natural projection.
Thus,  when  $c:=p(\alpha)\in  H^1(M;\reals)$  is  a purely real class and
$p[\omega]=a+ib$,
we  get  the condition $\langle c,b\rangle=0$. Similarly,
when  $i\cdot c:=p(\alpha)\in  i\cdot H^1(M;\reals)$  is  a purely imaginary
class, we  get  the condition $\langle c,a\rangle=0$.
Finally, the vector $[\bar\omega]$ is tangent to the Teichm\"uller geodesic
and is transversal both to $p^{-1}(\operatorname{Ann}(b))$ and
to $i\cdot p^{-1}(\operatorname{Ann}(a))$,
where by $\operatorname{Ann}(v)$ we denote
the symplectic annihilator of a real absolute cohomology class
$v$ in the ambient space $H^1(M;\reals)$. Hence, the sum
\begin{displaymath}
p^{-1}(\operatorname{Ann}(b))\oplus\reals[\bar\omega]
\oplus i \cdot p^{-1}(\operatorname{Ann}(a))
\end{displaymath}
is  a  direct  sum and thus defines a hyperplane. We have proved that
all vectors in this hyperplane belong to the tangent space to the
``unit  hyperboloid''.  Hence,  this hyperplane coincides with the
tangent space to the unit hyperboloid.

By \cite[\S{2}]{Forni:Deviation}, we may consider the subspace
$p^{-1}(\operatorname{Ann}(b))$ as strictly unstable; the subspace
$\reals[\bar\omega]$ as neutral and the subspace $i
p^{-1}(\cdot\operatorname{Ann}(a))$ as strictly stable for the
Teichm\"uller geodesic flow. We identify leaves of the unstable
foliation of $g_t$ with afine subspaces of $H^1(M, \Sigma, \reals)$. More
precisely, for $x = a + i b$, let 
\begin{displaymath}
W^{uu}(x) = TN(x) \cap \{ s \in H^1(M,\Sigma,\reals) \st \langle p(s),
p(b) \rangle = 0 \},
\end{displaymath}
and
\begin{displaymath}
W^{ss}(x) = TN(x) \cap i \; \{ s \in H^1(M,\Sigma,\reals) \st \langle
p(s), p(a) \rangle = 0 \},
\end{displaymath}
where $N(x)$ is the affine subspace defined in
\S\ref{sec:real:analytic:envelope}, and $TN(x)$ is its linear part. 
(Note that if the support of $\nu$ is an affine manifold $\cM$, then
the subspace $N(x) \subset H^1(M,\Sigma,\cx)$ 
coincides with the tangent space to $\cM$ at $x$). In view of the
above discussion, 
if $x = a + b i$, then the unstable leaf through $x$ is
identified with $x+ W^{uu}(x)$, and the stable leaf through $x$ is
identified with $x+ W^{ss}(x)$. 

\par
Let $A(x,t)$ denote the Kontsevich-Zorich cocycle (i.e. the action of
the Gauss-Manin connection on the real relative cohomology). 
We view $A(x,t)$ as an endomorphism
of  $H^1(M, \Sigma, \reals)$, 
%
whereas \cite{Avila:Gouezel} use the parallel transport 
$Dg_t: H^1(M, \Sigma, \reals) \to H^1(g_tM, \Sigma, \reals)$. 
The two maps differ by a scaling factor $e^t$.


\bold{Lyapunov Exponents.}
Let $\{ \lambda_i \st i \in \Lambda\}$ denote the Lyapunov
exponents of the cocycle $A(x,t)$ acting on the real Hodge bundle
$H^1_{\reals}$  with  respect to the Teichm\"uller flow on the support of
the   ergodic   $SL(2,\reals)$-invariant   probability  measure  $\nu$.
Let
\begin{displaymath}
\Lambda^- = \{ i \in \Lambda \st \lambda_i < 0 \}, \qquad \Lambda^+ =
\{ i \in \Lambda \st \lambda_i > 0 \}. 
\end{displaymath}
For a generic point $x$ in the support of $\nu$, let $E_i(x)$ be the Lyapunov
subspace (at $x$) corresponding to the Lyapunov exponent
$\lambda_i$. Since we are considering the invertible case of the
Osceledets Multiplicative ergodic theorem, we have
\begin{displaymath}
H^1(M,\reals) = \bigoplus_{i \in \Lambda} E_i(x).
\end{displaymath}

Suppose $x \in \operatorname{supp} \nu$, \mc{fixme}
and $s \in W^{uu}(x)$.  For $s$ sufficiently
small $x+s$ is a well-defined point in $\cH_1(\kappa)$ and 
is in the same leaf of the unstable foliation as $x$. We do not assert
that $x+s$ is in the support of $\nu$. In our local coordinates,
\begin{displaymath}
g_t(x + s) = g_t x + e^{t} A(x,t) s.
\end{displaymath}

\bold{A flat connection on the unstable leaf.}
Suppose $v(x) \in F(x)$. 
Since for small $s \in W^{uu}(x)$ the subspaces
$F(x+s)$ and $F^\perp(x)$ are complementary, we
may write
\begin{displaymath}
v(x) = v_x(x+s) - u(x,s), \ \text{ where $v_x(x+s) \in F(x+s)$,
  $u(x,s) \in F^\perp(x)$. }
\end{displaymath}
Then,
\begin{equation}
\label{eq:vxs}
v_x(x+s) = v(x) + u(x,s).
\end{equation}
Thus we have a linear map $P^+(x,x+s): F(x) \to F(x+s)$ such that
\begin{displaymath}
P^+(x,x+s) v(x) = v_x(x+s). 
\end{displaymath}
In view of Lemma~\ref{lemma:properties:vx} below, it is easy to see
that the map $P^+(x,x+s)$ coincides with the restriction to the Forni
subspace of the measurable flat connection $P^+$ defined in
\cite[4.2]{Eskin:Mirzakhani:measures}. The main difference is that in
our context, the map $P^+(x,x+s)$ depends real-analytically on $x$ and
$x+s$.

Fix $\beta > 0$ and let $K_\beta'$ be the set where all saddle
connections have length at least $\beta$. 
Then the Hodge inner product $Q_x( \cdot, \cdot)$ is uniformly 
continuous on $K_{\beta/2}'$.
We can pick a compact subset $K_\beta
\subset K_\beta'$ of positive measure such that the
subspaces $F(x)$ are uniformly continuous on $K_\beta$. We denote by
$\|\cdot\|$ the Hodge norm, given by $\|v\|_x = Q_x(v,v)^{1/2}$. 

\begin{lemma}
\label{lemma:properties:vx}
Suppose $x$ is $\nu$-generic. \mc{specify what this means} Then,
for $s \in W^{uu}(x)$,
\begin{itemize}
\item[{\rm (a)}] The vector $P^+(x,x+s)v(x)$ depends real-analytically
  on $s$. 
\item[{\rm (b)}] $P^+(x,x+s)v(x)$ is the unique vector in $F(x+s)$ such that
  for any sequence $t_n \to -\infty$ with $g_{t_n} x \in K_\beta$, we have 
\begin{equation}
\label{eq:going:to:zero}
\|A(x,t_n) P^+(x,x+s)v(x) - A(x,t_n) v(x) \| \to 0. 
\end{equation}
\item[{\rm (c)}] For any $s_1$ and $s_2$ in $W^{uu}(x)$, we have
\begin{displaymath}
P^+(x+s_1,x+s_1+s_2) P^+(x,x+s_1) v(x) = P^+(x,x+s_1+s_2) v(x).
\end{displaymath}
\end{itemize}
\end{lemma}
\begin{proof} The statement (a) is clear from the definition of
$P^+(x,x+s)$ and the fact that $F(x+s)$ is analytic in $s$. To see (b),
we will apply the geodesic flow to (\ref{eq:vxs}). 
Let $y_n = g_{t_n} x$. Let $\xi_n \in W^{uu}(y_n)$  
be such that
$g_{t_n} (x+s) = y_n + \xi_n$. 
\begin{center}
\begin{tikzpicture}[scale=0.3]

\draw[thick, ->] (0,0.4)--(17,0.4);
\draw[thick] (9,0.2)--(9,0.6);
\draw[thick] (1.2,1)--(1.2,8);
\draw[thick] (9,2)--(9,8);
\draw[thick] (11,4) parabola (0.7,2);
\draw[thick] (12,5) parabola (0.7,6.5);

\node (x) at (0.7,-0.5) {$t=0$};
\node (x) at (8.7,-0.5) {$t=t_n$};
\node (x) at (16.7,-0.5) {$-\infty$};
\node (x) at (1.3,8.5) {$W^{uu}(x)$};
\node (x) at (9.3,8.5) {$W^{uu}(y_n)$};
\node (x) at (3,6.8) {$x+s$};
\node (x) at (11.3,5.8) {$y_n+\xi_n$};
\node (x) at (1.9,1.8) {$x$};
\node (x) at (10,3.3) {$y_n$};
\end{tikzpicture}
\end{center}

We have $\xi_n \to 0$ as $t_n \to -\infty$. Also,
\begin{displaymath}
A(x,t_n) v(x) = A(x,t_n) P^+(x,x+s)v(x) - A(x,t_n) u(x,s). 
\end{displaymath}

We may start with $v(x)$ of norm $\|v(x)\|=1$, and let 
$$\alpha = \|P^+(x,x+s) v(x)\|.$$
Note that for $s \in W^{uu}(x)$ sufficiently small
and $t < 0$, 
\begin{displaymath}
A(x,t) = A(x+s,t)
\end{displaymath}
Then, 
by our convention on the Kontsevich-Zorich cocycle
\begin{equation}
\label{eq:norm:one}
\|A(x,t_n) v(x) \|_{y_n} = 1, \quad \|A(x,t_n) P^+(x,x+s)v(x)\|_{y_n+\xi_n} = \alpha.
\end{equation}
Also,  
$A(x,t_n) v(x) \in F(y_n)$ and
\begin{displaymath}
A(x,t_n) P^+(x,x+s)v(x) \in F(y_n +\xi_n), 
\quad A(x,t_n) u(x,s) \in F^\perp(y_n).  
\end{displaymath}
Since $\xi_n \to 0$ and $y_n \in K_\beta$, we have $F(y_n + \xi_n) \to
F(y_n)$. This, combined with (\ref{eq:norm:one}) implies that
\begin{equation}
\label{eq:A:wx}
A(x,t_n) u(x,s) \to 0,
\end{equation}
which proves (\ref{eq:going:to:zero}). 
To show the uniqueness in (b),
note that since the action of the
Kontsevich-Zorich cocycle on the Forni subspace is isometric, there
can be at most one vector $v' \in F(x+s)$ such that
\begin{equation}
\label{eq:unique:forni}
\|A(x,t_n) v' - A(x,t_n) v \| \to 0. 
\end{equation}
This completes the proof of (b). 

To see (c), note that (\ref{eq:unique:forni}) holds with $v' \in
F(x+s_1+s_2)$ being
either the left-hand-side or the right-hand-side of the displayed equation in
(c). Since the $v'$ satisfying (\ref{eq:unique:forni}) is unique, 
the two sides of the displayed equation in (c) are equal. 
\end{proof}

\bold{Notation.} 
In view of (c), when there is no potential for confusion, 
we denote $v_x(x+s) = P^+(x,x+s) v(x)$ simply by $v(x+s)$. 

\bold{Remark.} 
The equation (\ref{eq:A:wx}) implies that
\begin{equation}
\label{eq:vxs:components}
v(x+s) = v(x) + v_0(x,s) + \sum_{i \in \Lambda^+} v_i(x,s)
\end{equation}
where $v_0(x,s) \in E_0(x) \cap F^\perp(x)$, and for $i \in \Lambda^+$, 
$v_i(x,s) \in E_i(x) \subset F^\perp(x)$. 
The subspace $E_0(x) \cap
F^\perp(x)$ may be non-empty since there can be zero Lyapunov
exponents outside of the isometric subbundle $F$, see
\cite{Forni:Matheus:Zorich:Zero}. 
 \mc{give exact reference.}


\begin{lemma}{\rm (cf.\ \cite[Proposition 4.4(b)]{Eskin:Mirzakhani:measures})}
\label{lemma:parallel:transport:hodge:inv}
The fiberwise parallel transport $P^+$ 
defined above preserves the Hodge inner product
$Q(\cdot, \cdot)$ on $F$. In other words, for $v, w \in F(x)$, and $s
\in W^{uu}(x)$, 
\begin{displaymath}
Q_{x+s}(v(x+s),w(x+s)) = Q_x(v(x), w(x))\, .
\end{displaymath}
\end{lemma}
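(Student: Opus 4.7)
My plan is to reduce to the norm-preservation statement $\|v(x+s)\|_{x+s} = \|v(x)\|_x$ for every $v(x) \in F(x)$ and then polarize to recover the full bilinear identity. The starting point is Lemma~\ref{lemma:properties:vx}(b), which characterizes $v(x+s) := P^+(x,x+s)v(x)$ as the unique element of $F(x+s)$ satisfying $\|A(x,t_n) v(x) - A(x,t_n) v(x+s)\|_{y_n} \to 0$ along any sequence $t_n \to -\infty$ with $y_n := g_{t_n} x \in K_\beta$. For $\nu$-generic $x$ such a sequence exists by the Birkhoff ergodic theorem applied to the positive-measure set $K_\beta$, so we may fix one.

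The crucial ingredient is that $F$ is an isometric subbundle of the Hodge bundle (Theorem~\ref{theorem:forni:subspace:ergodic:def}): the Kontsevich--Zorich cocycle preserves the Hodge norm on $F$. Applied to $v(x) \in F(x)$ and to $v(x+s) \in F(x+s)$, this gives
\begin{equation*}
\|A(x,t_n) v(x)\|_{y_n} = \|v(x)\|_x, \qquad \|A(x,t_n) v(x+s)\|_{y_n + \xi_n} = \|v(x+s)\|_{x+s},
\end{equation*}
where $\xi_n \to 0$ is the vector introduced in the proof of Lemma~\ref{lemma:properties:vx}.

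The remaining step is to compare the two Hodge norms, evaluated at the nearby basepoints $y_n$ and $y_n + \xi_n$. Since $y_n \in K_\beta$ and $\xi_n \to 0$, the points $y_n + \xi_n$ lie in $K_{\beta/2}'$ for all $n$ large, and the uniform continuity of $Q$ on $K_{\beta/2}'$ yields
\begin{equation*}
\bigl|\,\|A(x,t_n) v(x+s)\|_{y_n + \xi_n} - \|A(x,t_n) v(x+s)\|_{y_n}\,\bigr| \longrightarrow 0.
\end{equation*}
Combining this with Lemma~\ref{lemma:properties:vx}(b) and the triangle inequality forces $\|v(x)\|_x = \|v(x+s)\|_{x+s}$. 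Finally, since $P^+(x,x+s)$ is a linear map (immediate from $v_x(x+s) = v(x) + w(x,s)$ and the uniqueness of the decomposition), polarization upgrades the identity $Q_x(v(x),v(x)) = Q_{x+s}(v(x+s),v(x+s))$ to the bilinear identity in the statement.

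The main obstacle is bookkeeping the three cohomology fibers involved ($H^1(M_{x+s})$, $H^1(M_{y_n})$, $H^1(M_{y_n+\xi_n})$) and confirming that the identifications implicit in $A(x,t_n)$ are the ones used in the isometry property of the Kontsevich--Zorich cocycle on $F$; once that is clarified, the argument is the three-line estimate above.
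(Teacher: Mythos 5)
Your argument is correct and follows essentially the same route as the paper: apply the geodesic flow in backward time along a recurrence sequence to $K_\beta$, invoke the isometry of the Kontsevich--Zorich cocycle on $F$, use Lemma~\ref{lemma:properties:vx}(b) to see that $A(x,t_n)v(x+s)$ and $A(x,t_n)v(x)$ converge together, and compare the Hodge forms at $y_n$ and $y_n+\xi_n$ via uniform continuity on a compact set. The only cosmetic difference is that you reduce to norm preservation and polarize, whereas the paper works directly with the bilinear form $Q$ applied to the pairs $(v,w)$ and $(v(x+s),w(x+s))$; the polarization step is legitimate because $P^+(x,x+s)$ is linear, as you note.
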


\begin{proof} 
Let $t_n$, $y_n$, $\xi_n$ be as in the proof of
Lemma~\ref{lemma:properties:vx}. We have
\begin{equation}
\label{eq:Qyn:v1}
Q_{y_n}(A(x,t_n) v(x), A(x,t_n)w(x)) = Q_x(v(x),w(x))
\end{equation}
and
\begin{equation}
\label{eq:Qyn:v2}
Q_{y_n+\xi_n}(A(x,t_n) v(x+s), A(x,t_n)w(x+s)) = Q_{x+s}(v(x+s),w(x+s))\,.
\end{equation}
We have $\xi_n \to 0$, and by Lemma~\ref{lemma:properties:vx} (b) we have
\begin{equation*}
\begin{aligned}
\|A(x,t_n) v(x+s) - A(x,t_n) v(x) \| &\to 0, \\
\|A(x,t_n) w(x+s) - A(x,t_n) w(x) \| &\to 0. 
\end{aligned}
\end{equation*}
Thus, the left-hand-sides of (\ref{eq:Qyn:v1}) and (\ref{eq:Qyn:v2})
approach each other as $t_n \to \infty$. Thus the right-hand-sides of
(\ref{eq:Qyn:v1}) and (\ref{eq:Qyn:v2}) are equal. 
\end{proof}

\section{A Formula for $P^+(x,x+s)$.}
\label{sec:preliminary:formula}
In this section, we derive an explicit formula,
Lemma~\ref{lemma:formula:v},  for the fiberwise parallel
transport map $P^+$ defined in the previous section. The main tool is
the real-analyticity of the fiberwise connection $P^+$. 
\medskip

Let $A_i(x,t)$ denote the restriction of $A(x,t)$ to $E_i(x)$. 
\begin{lemma}
\label{lemma:taylor:expand}
Let $s$, $v_0(x,s)$ and $v_i(x,s)$ be as in (\ref{eq:vxs:components}). 
For every $\delta > 0$ there exists a compact set $\hat{K}=
\hat{K}(\delta)$ of measure
at least $1-\delta$, such that the following holds:
Suppose $t
< 0$ is such that $g_t x \in \hat{K}$. Then for $i \in \Lambda^+ \cup
\{0\}$ \mc{fix notation} we have the Taylor expansion 
\begin{equation}
\label{eq:taylor}
v_i(x,s) = A_i(x,t)^{-1} \sum_{\alpha} c_{i,\alpha} (e^{t} A(x,t)
s)^{\alpha}, 
\end{equation}
where $\alpha$ is a multi-index, 
and the $c_{i,\alpha} \in E_i(g_t x) \cap F^\perp( g_t x)$ are
bounded independently of $t$. 
\end{lemma}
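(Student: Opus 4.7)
The plan is to use two ingredients: cocycle equivariance of the decomposition (\ref{eq:vxs:components}), and a uniform Taylor expansion at basepoints $y = g_t x \in K_\beta$ where we have bounded geometry.

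\textbf{Step 1 (Cocycle equivariance).} Set $y := g_t x$ and $s' := e^t A(x,t) s$, so that $g_t(x+s) = y + s'$. The limit characterization of $P^+$ in Lemma~\ref{lemma:properties:vx}(b), together with the cocycle identity $A(y, t_n) A(x, t) = A(x, t_n + t)$, forces
\begin{displaymath}
A(x, t)\, v(x+s) \;=\; v(y + s').
\end{displaymath}
Applying $A(x,t)$ term by term to (\ref{eq:vxs:components}) and comparing with the analogous decomposition of $v(y + s')$, we use $A(x,t) F(x) = F(y)$ and $A(x,t) E_j(x) = E_j(y)$, together with the uniqueness of the direct-sum decomposition $F(y) \oplus (E_0(y) \cap F^\perp(y)) \oplus \bigoplus_{j \in \Lambda^+} E_j(y)$, to conclude
\begin{displaymath}
v_i(x, s) \;=\; A_i(x, t)^{-1}\, v_i\bigl(g_t x,\, e^t A(x,t) s\bigr), \qquad i \in \Lambda^+ \cup \{0\}.
\end{displaymath}

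\textbf{Step 2 (Taylor expansion at $y$).} By Lemma~\ref{lemma:properties:vx}(a) the map $s' \mapsto v(y + s')$ is real-analytic near $s' = 0$, hence so is its projection $v_i(y, \cdot)$ onto $E_i(y)$; since $v(y + 0) = v(y)$, one has $v_i(y, 0) = 0$. Expanding in a power series
\begin{displaymath}
v_i(y, s') \;=\; \sum_{\alpha} c_{i,\alpha}(y)\, (s')^{\alpha}, \qquad c_{i,\alpha}(y) \in E_i(y) \cap F^\perp(y),
\end{displaymath}
and substituting $y = g_t x$, $s' = e^t A(x,t) s$ into Step~1 yields the desired formula (\ref{eq:taylor}).

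\textbf{Step 3 (Uniform boundedness --- the main obstacle).} It remains to bound $\|c_{i,\alpha}(g_t x)\|$ uniformly in $t$, subject to $g_t x \in K_\beta$. By Lemma~\ref{lemma:parallel:transport:hodge:inv} one has $\|v(y + s')\|_{y + s'} = \|v(y)\|_y$, and uniform continuity of the Hodge metric on $K_{\beta/2}'$ then gives a uniform bound on $\|v(y + s') - v(y)\|_y$ for $y \in K_\beta$ and $s'$ in a fixed neighborhood of $0$. The real-analyticity of $F$ on $K_\beta$, with a uniform lower bound on the radius of analyticity (achieved by shrinking $K_\beta$ if necessary, using a standard compactness argument for extending real-analytic maps to complex neighborhoods), yields via Cauchy-type estimates a uniform bound on each $F^\perp(y)$-valued Taylor coefficient $d_\alpha(y)$ of $v(y + \cdot) - v(y)$.

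The only remaining subtlety is that the Lyapunov projections $\pi_i(y) \colon F^\perp(y) \to E_i(y)$ are merely measurable in $y$. This is handled by Lusin's theorem: after a further restriction of $K_\beta$ to a subset of positive measure on which each $\pi_i$ is continuous (hence bounded), the coefficients $c_{i,\alpha}(y) = \pi_i(y)\, d_\alpha(y)$ are uniformly bounded, completing the proof. This last point --- that measurability of the Oseledets decomposition can be upgraded to uniform boundedness by passing to a Lusin set of positive measure inside $K_\beta$ --- is the key technical observation, and is harmless because the set $K_\beta$ entering the final application will only need positive $\nu$-measure.
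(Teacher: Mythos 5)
Your proof is correct and follows the paper's strategy: transfer the expansion to the compact base point $y = g_t x$ via equivariance of $P^+$ under the cocycle, Taylor-expand in $s' = e^t A(x,t)\,s$ there, and pull back by $A_i(x,t)^{-1}$. Your Step~3 supplies the justification behind the paper's bare assertion that the $c_{i,\alpha}$ are uniformly bounded on $K_\beta$ (the genuinely relevant input is the uniform complex polydisc of analyticity for $F$ on the compact set $K_\beta$, not the sup-norm bound from Lemma~\ref{lemma:parallel:transport:hodge:inv}, which only controls the real slice), and your observation that one should shrink $K_\beta$ to a Lusin/Pesin block so that the measurable Oseledets projections become continuous is exactly the point the paper leaves implicit.
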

\begin{proof} 
Let $y = g_t x$, and let $\xi$ be such that 
\begin{displaymath}
g_t(x+s) = y+ \xi. 
\end{displaymath}
Then, $\xi =  e^{t} A(x,t) s$. Write
\begin{displaymath}
A(x,t) v(x+s) = w(y+\xi).
\end{displaymath}
Since $\|v(x)\|=1$, and $A(x,t)$ acts isometrically on the Forni
subspace, we deduce $\|w(y)\| = 1$. 

Since $w(y+\xi)$ depends real-analytically on $\xi$, we can Taylor expand
\begin{displaymath}
w_i(y+\xi) = w_i(y) + \sum_{\alpha} c_{i,\alpha} \xi^{\alpha},
\end{displaymath}
where $c_{i,\alpha} \in E_i(y)$, $\alpha$ is a multi-index and 
we use the standard multi-index notation. In particular $|\alpha|$
denotes the sum of the indices in~ $\alpha$. 

Let $K_\beta$ be as in \S\ref{sec:connection}. 
We choose $K' \subset K_\beta$ of measure at least $1-\delta/2$
so all the Lyapunov subspaces $E_i(y)$ are uniformly continuous functions of $y
\in K'$. Also,
for each $i,\alpha$, choose a number $\epsilon_{i,\alpha}$ such that 
$\sum_{i,\alpha} \epsilon_{i,\alpha} < \delta/2$. 
We choose a compact set $K_{i,\alpha}$ of measure at least
$1-\epsilon_{i,\alpha}$ such that the Taylor coefficient 
$c_{i,\alpha}$ is uniformly bounded (in terms of $i$ and $\alpha$) on
$K_{i,\alpha}$. Finally, let 
\begin{displaymath}
\hat{K} = K' \cap \bigcap_{i,\alpha} K_{i,\alpha}. 
\end{displaymath}
Then the measure of $\hat{K}$ is at least $1-\delta$, and 
also all the Taylor coefficients $c_{i,\alpha}$ are uniformly bounded
(in terms of $i$ and $\alpha$) for $y \in \hat{K}$. 

For $i \in \Lambda^+
\cup \{0 \}$, \mc{fix notation here also}
\begin{displaymath}
A_i(x,t) v_i(x,s) = \sum_{\alpha} c_{i,\alpha} (e^{t} A(x,t) s)^{\alpha}, 
\end{displaymath}
(and we used the fact that $v_i(0) = 0$). Now applying $A_i(x,t)^{-1}$
to both sides, we obtain (\ref{eq:taylor}). 
\end{proof}

\bold{Remark.}  It is clear from Lemma~\ref{lemma:taylor:expand} 
that $v_i(x,s)$ is a polynomial in~$s$, i.e.\ for
each $s \in W^{uu}(x)$ the dependence of $v_i(x, s)$ for varying $s$ 
in a fixed basis of $E_i$ is polynomial in $s$. Indeed if $|\alpha|$ is
sufficiently large (depending on the Lyapunov spectrum) then for any
$t < 0$
such that $g_t x \in \hat{K}$, the coefficient of $s^\alpha$ in
$v_i(x,s)$ is
bounded by
\begin{displaymath}
\|A_i(x,t)^{-1}\| \|e^{t} A(x,t)\|^{\alpha} \le C 
\|A_i(x,t)^{-1}\| e^{(1-\lambda_2)|\alpha|t } \le C e^{-t}
e^{(1-\lambda_2)|\alpha| t}
\end{displaymath}
where $\lambda_2 < 1$ is second Lyapunov exponent of the
Kontsevich-Zorich cocycle. Thus, if $|\alpha| > 1/(1-\lambda_2)$, the
right-hand-side tends to $0$ as $t \to -\infty$.

Let $p: H^1(M, \Sigma, \reals) \to H^1(M, \reals)$ be the
natural projection map. We recall that the Lyapunov spectrum of the
Kontsevich-Zorich cocycle on $H^1(M, \Sigma, \reals)$ consists of
the Lyapunov spectrum of the Kontsevich-Zorich cocycle on $H^1(M,
\reals)$ union $\dim (\ker p)$ zeroes. It can also be shown that
if $\lambda$ is a nonzero Lyapunov exponent, $\tilde{E}_\lambda$ the
Lyapunov subspace of $H^1(M, \Sigma, \reals)$ corresponding to
$\lambda$, and $E_\lambda$ the Lyapunov subspace corresponding to
$\lambda$ on $H^1(M, \reals)$, then $p(\tilde{E}_\lambda) =
E_\lambda$, and $p$ induces an isomorphism between these two subspaces. 
For proofs of these statements, see e.g.\ \cite{Zorich}.

\begin{lemma}
\label{lemma:independent:negative}
Suppose $s \in W^{uu}(x) \cap \bigoplus_{j \in \Lambda^+} \tilde{E}_j(x)$. 
Then, $F(x+s) = F(x)$. 
\end{lemma}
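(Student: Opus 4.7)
The plan is to show that $P^+(x,x+s)$ acts as the identity on $F(x)$. Since $P^+(x,x+s): F(x)\to F(x+s)$ is a linear isomorphism (with inverse $P^+(x+s,x)$, which exists via the cocycle identity in Lemma~\ref{lemma:properties:vx}(c) applied with $s_1 = s$, $s_2 = -s$), this immediately yields $F(x+s) = P^+(x,x+s)(F(x)) = F(x)$. Accordingly, I would fix $v(x) \in F(x)$, use the decomposition (\ref{eq:vxs:components}), and reduce the problem to proving $v_i(x,s) \equiv 0$ for every $i \in \Lambda^+ \cup \{0\}$.

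To this end I would apply Lemma~\ref{lemma:taylor:expand} along a sequence $t_n \to -\infty$ with $g_{t_n}x \in K_\beta$ (such a sequence exists by Poincar\'e recurrence at a $\nu$-generic $x$) and estimate each term of the resulting series using Oseledets. For $s = \sum_j s_j$ with $s_j \in E_j(x)$, $\lambda_j > 0$, one has $\|e^{t_n}A(x,t_n)s_j\| = e^{(1+\lambda_j)t_n+o(|t_n|)}\|s_j\|$ and $\|A_i(x,t_n)^{-1}\| = e^{-\lambda_i t_n+o(|t_n|)}$ (with $\lambda_0 = 0$). Hence the $\alpha$-term of the expansion has size $\exp\bigl(t_n(-\lambda_i+\sum_j(1+\lambda_j)\alpha_j)+o(|t_n|)\bigr)$.

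The key observation is that the decay rate $-\lambda_i + \sum_j(1+\lambda_j)\alpha_j$ is strictly positive for every $|\alpha|\ge 1$: writing $\lambda_{\min}^+$ for the minimum positive Lyapunov exponent of the cocycle, one has $\lambda_{\min}^+ > 0$ and $\lambda_i \le 1$ (the Kontsevich--Zorich bound), so $\sum_j(1+\lambda_j)\alpha_j \ge (1+\lambda_{\min}^+)|\alpha| \ge 1+\lambda_{\min}^+ > \lambda_i$. Thus every term tends to $0$ as $t_n\to -\infty$. Since $(e^{t_n}A(x,t_n)s)^\alpha$ is homogeneous of degree $|\alpha|$ in $s$, and $v_i(x,s)$ is polynomial in $s$ of bounded degree by the Remark after Lemma~\ref{lemma:taylor:expand}, the coefficient of any monomial $s^\beta$ in $v_i(x,s)$ is a finite sum over those $\alpha$ with $|\alpha|=|\beta|$, each of which decays to $0$; being independent of $t_n$, this coefficient must vanish. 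Hence $v_i(x,s)\equiv 0$ and $v(x+s)=v(x)$, as required. The delicate point I anticipate is exactly this strict positivity of the decay rate: the hypothesis that $s$ lies in strictly positive Lyapunov directions is used critically here, since any component of $s$ in a zero or negative Lyapunov subspace would produce an $\alpha$-term with non-positive rate that could not be forced to decay.
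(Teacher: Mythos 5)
Your proof is correct and follows essentially the same route as the paper's: apply the Taylor expansion of Lemma~\ref{lemma:taylor:expand} along a recurrent sequence $t_n\to-\infty$, and show that the rate $-\lambda_i+\sum_j(1+\lambda_j)\alpha_j$ is strictly positive because $\lambda_i\le 1$ and $\lambda_j>0$, forcing all coefficients to vanish. The only cosmetic difference is that the paper bounds $\|v_i(x,s)\|$ directly by $C\max_j e^{(1-\lambda_i+\lambda_j)t}\|s_j\|$ and lets $t\to-\infty$, while you extract each monomial coefficient and observe it must be zero; both arguments rest on the same exponent estimate.
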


\begin{proof} Suppose $v(x) \in F(x)$, and for $i \in \Lambda^+ \cup
\{0\}$ let $v_i(s)$ be as in (\ref{eq:vxs:components}). 
Write
\begin{displaymath}
s = \sum_{j \in \Lambda^+} s_j,
\end{displaymath}
where $s_j \in \tilde{E}_j(x)$. 
Then, \mc{explain notation} as $t \to -\infty$, 
\begin{displaymath}
\| e^{t} A(x,t) s_j \| \approx e^{(1+\lambda_j) t} \|s_j\| 
\end{displaymath}
and 
\begin{displaymath}
\|A(x,t)^{-1} c_{i,\alpha} \| \le C e^{-\lambda_i t}, \quad \text{ where }
\lambda_i \le 1. 
\end{displaymath}
It now follows from (\ref{eq:taylor}) that for $t < 0$ such that $g_t x
\in \hat{K}$, 
\begin{displaymath}
\|v_i(x,s)\| \le C \|A(x,t)^{-1}\|  \, \max_{j \in \Lambda^+} \|e^{t}
A(x,t) s_j \| \le C \max_{j \in \Lambda^+}  \,
e^{(1-\lambda_i+\lambda_j) t}  \|s_j\|
\end{displaymath}
which tends to $0$ as $t > -\infty$ 
since $1-\lambda_i \ge 0$ and $\lambda_j > 0$. 
Thus, $v(x+s)$ is independent of the $s_j$ for $j \in \Lambda^+$. 
\end{proof}

Let $N(x)$ be as in \S\ref{sec:real:analytic:envelope}. By
Proposition~\ref{prop:N:linear}, we have
\begin{displaymath}
T N(x) = \cx \tensor T_\reals N(x), 
\end{displaymath}
where $T_\reals N(x)$ is a subspace of $H^1(M, \Sigma, \reals)$. 

Let $L(x) \subset {T_\reals N(x)}$
be the smallest $SL(2,\reals)$-invariant subspace such that $p(L(x))
\supset F^\perp(x) \cap T_\reals N(x)$. We have
\begin{equation}
\label{eq:L:plus:F:plus:kerp}
T_\reals N(x) = L(x) + p^{-1}(F(x))
\end{equation}
This sum of subspaces need not be direct. 

\begin{lemma}
\label{lemma:v:independent:F:perp}
Suppose $x_0 \in \operatorname{supp} \nu$, \mc{fixme} and $x \in
N(x_0)$. Then, for all $s \in L(x)$,  
\begin{displaymath}
F(x+s)=F(x). 
\end{displaymath}
\end{lemma}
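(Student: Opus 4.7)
The plan is to construct an $SL(2,\reals)$-invariant linear subbundle $S \subset H^1(M,\Sigma,\reals)$ with $F(x+s)=F(x)$ for all $x \in N(x_0)$ and sufficiently small $s \in S(x)$, and with $p(S(x)) \supset F^\perp(x)$. By minimality of $L(x)$ in its defining property, this forces $L(x) \subset S(x)$, giving the desired conclusion.

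First, I would fix a $\nu$-generic $x_1 \in \operatorname{supp}(\nu) \cap N(x_0)$ and set $S^+(x_1) := \bigoplus_{j \in \Lambda^+} \tilde E_j(x_1)$, the positive-Lyapunov Oseledets subspace inside $H^1(M,\Sigma,\reals)$. Lemma~\ref{lemma:independent:negative} then supplies $F(x_1+s) = F(x_1)$ for all small $s \in S^+(x_1)$. Defining $S(x_1)$ to be the smallest $SL(2,\reals)$-invariant linear subspace of $H^1(M,\Sigma,\reals)$ containing $S^+(x_1)$, the $SL(2,\reals)$-equivariance $F(gy) = g F(y)$ of the Forni subspace together with the period-coordinate identity $g(y+s) = gy + A(g,y) s$ should allow the identity $F(x_1+s) = F(x_1)$ to be transported from $S^+(x_1)$ to all small $s \in S(x_1)$.

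Next, I would verify $p(S(x_1)) \supset F^\perp(x_1)$. The subspace $V := p(S(x_1)) \cap F^\perp(x_1)$ is an $SL(2,\reals)$-invariant subbundle of $F^\perp$, hence symplectic by Theorem~\ref{theorem:properties:forni}(b), and its symplectic complement $V^\dagger \subset F^\perp$ is again $SL(2,\reals)$-invariant. If $V^\dagger$ were nonzero, Theorem~\ref{theorem:properties:forni}(b) combined with the symplectic symmetry of the Kontsevich--Zorich spectrum would produce a positive Lyapunov direction inside $V^\dagger \subset F^\perp$; but any such direction already lies in $p(S^+(x_1)) \subset V$, contradicting $V \cap V^\dagger = 0$. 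Therefore $V = F^\perp(x_1)$, and minimality of $L(x_1)$ now gives $L(x_1) \subset S(x_1)$, so $F(x_1+s) = F(x_1)$ for small $s \in L(x_1)$.

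Finally, to upgrade the conclusion from a $\nu$-generic $x_1$ to all $x \in N(x_0)$, I would use that $F$, $F^\perp$, and $L$ depend real-analytically on the base point in a neighborhood of $x_0$ (by the discussion of Section~\ref{sec:forni:subspace} and the definitions), so that the statement ``$F(y+s) = F(y)$ for all small $s \in L(y)$'' is equivalent to the vanishing of a real-analytic function on $y \in U(x_0)$; it is known on the positive-measure set $\operatorname{supp}(\nu) \cap N(x_0)$ and hence extends to all of $N(x_0)$ by Lemma~\ref{lemma:zariski:density}. The main obstacle lies in the second paragraph: the naive set $\{s : F(x+s) = F(x)\}$ is \emph{a priori} only a real-analytic variety, so realizing $S(x_1)$ as an \emph{honest linear subspace} on which this identity persists will require combining the cocycle/additivity property of the connection $P^+$ (Lemma~\ref{lemma:properties:vx}(c)) with the $SL(2,\reals)$-equivariance of the Forni subspace.
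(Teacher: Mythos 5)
Your overall strategy mirrors the paper's: exhibit an $SL(2,\reals)$-invariant subspace $L'(x)$ along which $F$ is constant, show $p(L'(x)) \supset F^\perp(x)$, and invoke minimality of $L(x)$. Your second paragraph (the argument that $p(S(x_1)) \supset F^\perp(x_1)$ via symplecticity of invariant subbundles of $F^\perp$ and symmetry of the Lyapunov spectrum) is essentially a correct fleshing-out of the final step of the paper's proof, and your third paragraph (extending from a.e.\ $x$ to all of $N(x_0)$ by Lemma~\ref{lemma:zariski:density}) is sound.

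The genuine gap is the one you flagged at the end, and the tools you propose to close it do not suffice. You want to upgrade ``$F$ is constant along the $\bigoplus_{j \in \Lambda^+} \tilde E_j(x_1)$ slice'' (Lemma~\ref{lemma:independent:negative}) to ``$F$ is constant along the slice of the $SL(2,\reals)$-hull of that subspace'' by equivariance. But there are two different $SL(2,\reals)$-actions in play, and they do not match. The equivariance $F(gy) = A(g,y) F(y)$, differentiated at $y = x_1$ in a direction $s$, shows that the kernel $K(x_1)$ of $DF_{x_1}$ satisfies $g\,K(x_1) = K(gx_1)$ where $g$ acts on $s \in \reals^2 \otimes H^1(M,\Sigma,\reals)$ through the \emph{$\reals^2$-factor}. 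By contrast, the subbundle $S$ you form as the ``smallest $SL(2,\reals)$-invariant subbundle containing $S^+$'' lives in $H^1(M,\Sigma,\reals)$ and is saturated under the \emph{cocycle} $A(g,\cdot)$ acting on the $H^1$-factor. Knowing $\tilde S^+(x_1) \subset K(x_1)$ therefore tells you $g^{-1}\tilde S^+(gx_1) \subset K(x_1)$, not $A(g,x_1)^{-1}\tilde S^+(gx_1) \subset K(x_1)$, so you cannot conclude $S(x_1) \subset K(x_1)$. (Relatedly, your ``period-coordinate identity'' $g(y+s) = gy + A(g,y)s$ is not correct as written: locally the cocycle is trivial and $g$ acts linearly on the $\reals^2$-factor, so $g(y+s) = gy + gs$.) The cocycle property of $P^+$ (Lemma~\ref{lemma:properties:vx}(c)) is also of no help here, since it only encodes additivity of the flat connection inside a single unstable leaf, which is already exhausted by $\tilde S^+$.

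What the paper does instead is show directly that the level set $\cL(x) = \{y \in N(x_0) : F(y) = F(x)\}$ is an affine subset defined over $\reals$, so that $\cL(x) = x + \cx \otimes L'(x)$ for some $SL(2,\reals)$-invariant $L'(x) \subset H^1(M,\Sigma,\reals)$. This is obtained by (i) a constant-rank argument: by Lemma~\ref{lemma:zariski:density} the rank of $DF$ on $N(x_0)$ equals its generic rank $r$ on $\operatorname{supp}\nu$, so near a generic point $\cL(x)$ is a smooth $SL(2,\reals)$-equivariant real-analytic submanifold by the implicit function theorem; and (ii) applying the Avila--Gou\"ezel rigidity argument, i.e.\ the proof of Proposition~\ref{prop:N:linear}, to this submanifold to conclude it is affine and of the form $\cx \otimes (\text{real subspace})$. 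Only then do Lemma~\ref{lemma:independent:negative} and minimality of $L(x)$ enter, exactly as in your paragraphs two and three. Step (ii) is the essential ingredient you are missing; without it, the ``$F$ is constant along a linear slice'' statement you rely on is unsupported.
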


\begin{proof} 
For $x \in N(x_0)$, let $\cL(x)$ denote the set of $y$
in $N(x_0)$ such that $F(y) = F(x)$. Then $\cL(x)$ is a
real-analytic set which is $SL(2,\reals)$-equivariant. We can think of
$F$ as a function from $N$ to the Grassmanian. 
Let $r$ be such that $DF$ has generically
rank $r$ on the support of $\nu$. Then, by
Lemma~\ref{lemma:zariski:density}, $\operatorname{rank} DF \le r$ on
$N(x_0)$ (otherwise 
the support of $\nu$ would be contained in a proper real-analytic
subset of $N(x_0)$), and by continuity, 
$\operatorname{rank} DF \ge r$ on a neighborhood of $x \in N$. 
By the implicit function theorem, $\cL(x)$ is regular 
on a neighborhood of $x$ in the support of $\nu$. Then, by
Proposition~\ref{prop:cL:linear}, for almost all $x$, 
the set $\cL(x)$ is affine and defined over $\reals$. Since being affine and
being defined over $\reals$ are real-analytically closed conditions,
by Lemma~\ref{lemma:zariski:density}, the same is true for all $x \in
N(x_0)$. 

Thus  there  exists  an  $SL(2,\reals)$-invariant  
subspace $\hat L(x) \subset T_\reals N(x)$
such that $\cL(x) = \{ x + z \st z \in \cx
\tensor   \hat   L(x)\}$.  Note that for $x = a + i b$, 
we know that $a \in
\tilde{E}_{-1}(x)$, therefore
\begin{displaymath}
W^{uu}(x) \cap \bigoplus_{j \in \Lambda^+} \tilde{E}_j(x) = T_\reals
N(x) \cap \bigoplus_{j \in \Lambda^+} \tilde{E}_j(x),
\end{displaymath}
Therefore,  by Lemma~\ref{lemma:independent:negative},
\begin{displaymath}
T_\reals N(x) \cap \bigoplus_{j \in \Lambda^+} \tilde{E}_j(x)
\subset \hat{L}(x),
\end{displaymath}
hence 
\begin{displaymath}
 p(T_\reals N(x)) 
\cap \bigoplus_{i  \in  \Lambda^{+}}  E_i(x) \subset p(\hat  L(x)).  
\end{displaymath}
Let us now prove that
\begin{equation}
\label{eq:contained:in:P:hat:L}
p(T_\reals N(x)) \cap F^\perp(x)\subset p(\hat L(x)).
\end{equation}
Since  $p(\hat  L(x))$  is  $SL(2,\reals)$-invariant,  
$V(x)=p(\hat L(x)) \cap
F^\perp(x)$  is  an  $SL(2,\reals)$-invariant subbundle. 
Clearly, 
\begin{equation}
\label{eq:conained:in:V}
p(T_\reals N(x)) \cap \bigoplus_{i \in \Lambda^{+}} E_i(x) \subset V(x).
\end{equation}
By Theorem~\ref{theorem:properties:forni} (b)
the  subbundle  $V(x)$ is symplectic. Then, its
symplectic   complement   $V'$   inside   $F^\perp \cap
p(T_\reals N)$  is  also
$SL(2,\reals)$-invariant and $p(T_\reals N) \cap F^\perp =
V\oplus V'$. If $V'\neq\{0\}$,
then  by Theorem~\ref{theorem:properties:forni} (b), 
the Lyapunov spectrum of $V' \subset p(T_\reals N)$ 
contains at
least   one   nonzero  Lyapunov  exponent  and  hence,  since  it  is
symplectic,  at  least  one strictly positive Lyapunov exponent. This
contradicts (\ref{eq:conained:in:V}). Thus 
(\ref{eq:contained:in:P:hat:L}) holds. 

Since  $L(x)$ is the minimal
$SL(2,\reals)$-invariant   subbundle   such that $p(L(x)) \supset
p(T_\reals N(x)) \cap F^\perp(x)$, we  have, by
(\ref{eq:contained:in:P:hat:L}), 
$L(x)\subseteq\hat L(x)$. From the definition of $\hat{L}(x)$, this
implies that $F(y) = F(x)$ for $s \in L(x)$.
\end{proof}

\begin{lemma}
\label{lemma:formula:v}
Suppose $x \in N(x_0)$ is in the support of the measure $\nu$. \mc{fixme}
If $x = a+b\bi$ (with $Area(x) = \langle a, b \rangle = 1$), we have
for any $v(x) \in F(x)$ and any $s \in W^{uu}(x)$,
\begin{equation}
\label{eq:formula:v}
P^+(x,x+s)v(x) \equiv v(x+s) = v(x) + \langle v, p(s)\rangle p(b).
\end{equation}
\end{lemma}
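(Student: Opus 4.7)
The plan is to recognize the right-hand side of (\ref{eq:formula:v}) as the unique candidate singled out by the characterization of $P^+$ in Lemma~\ref{lemma:properties:vx}(b), and then appeal to that uniqueness. Set
\[
\tilde v := v(x) + \langle v, p(s)\rangle\, p(b).
\]
Since $p(b) \in F^\perp(x)$ by Lemma~\ref{lemma:properties:Forni}(c), the difference $\tilde v - v(x)$ lies in $F^\perp(x)$, so $\tilde v$ has the correct fiber direction for a candidate $v(x+s) = v(x) + w(x,s)$ with $w(x,s) \in F^\perp(x)$. Moreover, $p(b)$ is the tautological ``imaginary period'' direction and so belongs to the Lyapunov subspace of exponent $+1$; hence
\[
\|A(x,t_n)\tilde v - A(x,t_n) v(x)\| = |\langle v, p(s)\rangle|\cdot \|A(x,t_n) p(b)\| = O(e^{t_n}) \longrightarrow 0
\]
as $t_n \to -\infty$ along any sequence with $g_{t_n} x \in K_\beta$. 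Thus, once $\tilde v \in F(x+s)$ is established, Lemma~\ref{lemma:properties:vx}(b) will identify $\tilde v$ with $P^+(x,x+s) v(x) = v(x+s)$, completing the proof.

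The heart of the argument is the membership $\tilde v \in F(x+s)$. By Theorem~\ref{theorem:properties:forni}(a) together with Lemma~\ref{lemma:properties:Forni}(a)--(b), the subspace $F(x+s)$ is the symplectic complement of $F^\perp(x+s)$ in $H^1(M,\reals)$, so it suffices to verify that $\tilde v$ pairs trivially under the intersection form with every element of $F^\perp(x+s)$. The tautological pair $p(a+s),\, p(b) \in F^\perp(x+s)$ (Lemma~\ref{lemma:properties:Forni}(c)) is handled by direct calculation, using $\langle v, p(a)\rangle = \langle v, p(b)\rangle = 0$ (since $v \in F(x)$ is symplectically orthogonal to $F^\perp(x)$), $\langle p(a), p(b)\rangle = 1$, and the hypothesis $\langle p(s), p(b)\rangle = 0$:
\[
\langle \tilde v, p(b)\rangle = 0, \qquad \langle \tilde v, p(a+s)\rangle = \langle v, p(s)\rangle + \langle v, p(s)\rangle\langle p(b), p(a)\rangle = 0.
\]
For the directions of $F^\perp(x+s)$ transverse to this pair, I would invoke Lemma~\ref{lemma:v:independent:F:perp}: the Forni subspace is constant along the $SL(2,\reals)$-invariant subspace $L(x) \subset H^1(M,\Sigma,\reals)$, and the minimality in its definition together with the $SL(2,\reals)$-invariance of $F^\perp(x)$ forces $p(L(x)) = F^\perp(x)$. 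Then the flatness of $P^+$ from Lemma~\ref{lemma:properties:vx}(c), applied to a decomposition $s = s_L + s_T$ with $s_L \in L(x)$ (for which $\langle v, p(s_L)\rangle = 0$ by symplectic orthogonality), reduces the non-tautological pairings to the tautological ones already computed.

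The main obstacle will be pinning down the ``$F_0^\perp$-component'' of $w(x,s)$, i.e., the contribution in the symplectic complement of $\mathrm{span}(p(a), p(b))$ inside $F^\perp(x)$: while the tautological pairings fall out cleanly from the hypothesis, the non-tautological directions of $F^\perp$ deform with $s$ in a way coupled to the variation of $F$ itself. The way around is to exploit the polynomial structure of $w(x,s)$ coming from Lemma~\ref{lemma:taylor:expand}, verify the consistency of the formula under $L(x)$-perturbations via Lemma~\ref{lemma:v:independent:F:perp} and the flatness in Lemma~\ref{lemma:properties:vx}(c), and then use real-analyticity together with Lemma~\ref{lemma:zariski:density} to extend the identity from the support of $\nu$ to all admissible $s$ in the unstable leaf.
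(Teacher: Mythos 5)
Your plan --- guess the right-hand side $\tilde v = v(x) + \langle v, p(s)\rangle\, p(b)$, check the decay condition from Lemma~\ref{lemma:properties:vx}(b), and then appeal to its uniqueness clause --- is internally coherent, and the decay estimate is correct: $p(b)$ lies in the Lyapunov subspace of exponent $+1$, so $A(x,t_n)p(b)\to 0$ as $t_n\to-\infty$. The calculation that $\tilde v$ pairs to zero against $p(a+s)$ and $p(b)$ is also correct. But the uniqueness in Lemma~\ref{lemma:properties:vx}(b) is uniqueness \emph{within} $F(x+s)$, so your argument hinges entirely on establishing the membership $\tilde v \in F(x+s)$, and this is exactly where the proposal has a genuine gap. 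Pairing to zero against the two tautological vectors $p(a+s), p(b)$ is nowhere near a verification that $\tilde v$ lies in the symplectic complement of all of $F^\perp(x+s)$: you do not have a usable description of $F^\perp(x+s)$ beyond those two vectors, and the ``non-tautological directions'' paragraph is a gesture, not an argument. The attempt to reduce those pairings to the tautological ones via a decomposition $s = s_L + s_T$ with $s_L\in L(x)$ is not spelled out, leaves $s_T$ entirely unspecified, and ignores that what one actually needs from $s_T$ is that $A(x,t)$ acts on it with zero Lyapunov exponents --- that is the mechanism that makes the Taylor expansion collapse.

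The paper's proof does not attempt to verify membership in $F(x+s)$ at all; it \emph{derives} the formula. It writes $s = s_1 + s_2$ with $s_1 \in L(x+s)$ and $s_2 \in F(x) + \ker p$, uses Lemma~\ref{lemma:v:independent:F:perp} to discard $s_1$ (so $v(x+s) = v(x+s_2)$), and then plugs $s_2$ into the Taylor expansion of Lemma~\ref{lemma:taylor:expand}. Because $A(x,t)$ has zero exponents on $F + \ker p$, the coefficient of every term $s_2^\alpha$ with $|\alpha|\ge 2$ decays as $t\to-\infty$, and among the linear terms only the one in the $\lambda_i=1$ subspace, spanned by $p(b)$, survives. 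That directly yields $v(x+s) = v(x) + \Phi_x(s_2)\,p(b)$, and the symplectic orthogonality to the $SL(2,\reals)$-orbit then pins down $\Phi_x$. This is a different structure from guess-and-verify: it never needs an a priori description of $F(x+s)$ because $v(x+s)=P^+(x,x+s)v(x)$ lies in $F(x+s)$ by construction. To repair your approach you would have to carry out essentially this derivation anyway, at which point the uniqueness clause of Lemma~\ref{lemma:properties:vx}(b) is no longer doing any work.
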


\begin{proof} 

For $s$ sufficiently small, we have in view of
(\ref{eq:L:plus:F:plus:kerp}), 
\begin{displaymath}
T_\reals N(x) = L(x+s) + p^{-1}(F(x)).
\end{displaymath}
Therefore, we may write $s = s_1 + s_2$, where $s_1 \in L(x+s)$ and
$s_2 \in p^{-1}(F(x))$. 
Then, by
Lemma~\ref{lemma:v:independent:F:perp}, $F(x+s) = F(x+s_2)$. Then, by
the definition of $v$, \mc{make more precise}
\begin{displaymath}
v(x+s) = v(x+s_2).  
\end{displaymath}
We now apply (\ref{eq:taylor}) with $s = s_2$. 
Note that $A(x,t)$ acts with zero Lyapunov exponents on
$p^{-1}(F(x))$.  Let $v_i(x,s_2)$ be as in (\ref{eq:vxs:components}). Then, 
\begin{displaymath}
\|v_i(x,s_2)\| \le C \sum_\alpha \|A_i(x,t)^{-1} \|
e^{t|\alpha|}s_2^{\alpha} \le C 
\sum_{\alpha} e^{(-\lambda_i+
  |\alpha|)t} s_2^\alpha
\end{displaymath}
Since $\lambda_i \le 1$, we see that the coefficient of $s_2^\alpha$
for any $|\alpha| > 1$ tends to  
$0$ as $t \to -\infty$ with $g_t x \in \hat{K}$. Therefore all
quadratic and higher order terms vanish. Also the only surviving linear
term is with $\lambda_i = 1$. 
Note that the Lyapunov subspace on $H^1(M, \reals)$ 
corresponding to the Lyapunov exponent $1$ is $1$-dimensional, and
$p(b)$ belongs to it. 
Hence, 
\begin{displaymath}
v((x+s_1)+s_2) = v(x+s_1) + \Phi_{x}(s_2) p(b), 
\end{displaymath}
where $\Phi_{x}$ is a linear map. Thus, 
\begin{displaymath}
v(x+s) = v(x) + \Phi_{x}(s_2) p(b).
\end{displaymath}
But, we know that $v(x+s)$ has to be symplectically orthogonal to the
$SL(2,\reals)$ orbit at $x+s$, i.e. the space spanned by $p(a+s)$ and
$p(b)$. Thus, 
\begin{displaymath}
0 = \langle p(a+s), v(x+s) \rangle = \langle p(a+s), v(x) \rangle +
\Phi_{x}(s_2) \langle p(a+s), p(b) \rangle. 
\end{displaymath}
From $s \in W^{uu}(x)$ we deduce 
$\langle p(s), p(b) \rangle = 0$. 
Moreover, we have $\langle p(a), v(x) \rangle = 0$ and 
$\langle p(a), p(b) \rangle = 1$. Then,
\begin{displaymath}
\Phi_{x}(s_2) = - \langle p(s),v(x)\rangle = - \langle  p(s_2), v(x)\rangle. 
\end{displaymath}
Thus (\ref{eq:formula:v}) follows. 
\end{proof}

\section{The curvature of the connection.}
\label{sec:curvature}
We have defined a connection $P^+$ on unstable leaves. Similarly,
there is a connection $P^-$ on stable leaves. Also, by its definition,
the Forni subspace is constant along the neutral (i.e. geodesic flow)
direction. Since all of these 
connections are real-analytic, they can together define a connection
$P$ on the real-analytic envelope of the support of the
measure. Even though both $P^+$ and $P^-$ are flat
  virtually by definition, it is not obvious whether the connection
  $P$ is flat. The key calculation
of this paper is the following proposition, which can be viewed as
computing the curvature of $P$.  

\begin{proposition}
\label{prop:no:yeti}
Suppose $x_0$ is in the support of $\nu$, and let $N =
N(x_0)$. Let $TN$  
be the tangent space to the affine manifold $N$. We have by
Proposition~\ref{prop:N:linear}, that $TN = \cx \tensor T_\reals N$
for some subspace $T_\reals N \subset H^1(M, \Sigma, \reals)$. 
Then, for
all $x \in N$, 
\begin{equation}
\label{eq:no:yeti}
p(T_\reals N) \subset F^\perp(x)
\end{equation}
and $F(x)$ is locally constant on $N$. 
\end{proposition}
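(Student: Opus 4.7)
The plan is to establish part (a) --- $p(T_\reals N) \subset F^\perp(x)$ --- by a curvature computation around a small square in $N$, and then to deduce part (b) as an immediate consequence.

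First I would derive an analog of Lemma~\ref{lemma:formula:v} for the stable-leaf connection $P^-$. Rerunning the derivation with $g_t$ replaced by $g_{-t}$ --- whose top Kontsevich--Zorich exponent on $H^1(M,\reals)$ corresponds to $p(a)$ rather than $p(b)$ --- and imposing symplectic orthogonality to the ``new $b$'' direction at $x+i\sigma$ yields
\[
P^-(x, x+i\sigma)\, v(x) \;=\; v(x) - \langle v, p(\sigma)\rangle\, p(a)
\]
for $\sigma$ in the stable leaf at $x$; the sign flip relative to $P^+$ reflects $\langle p(a), p(b)\rangle = 1$ versus $\langle p(b), p(a)\rangle = -1$. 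By Proposition~\ref{prop:N:linear}, $TN$ is preserved by $J$, so for small $s \in T_\reals N$ the points $x+s$, $x+is$, and $x+s+is$ all lie in $N$.

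The central step is a loop calculation. For $s \in T_\reals N \cap H_\perp(x)$ I would parallel-transport $v \in F(x)$ around the square $x \to x+s \to x+s+is \to x+is \to x$ using $P^+$, then $P^-$ at $x+s$ (where the new ``$a$'' is $a+s$), then $P^+$ at $x+s+is$ with displacement $-s$, then $P^-$ at $x+is$ with displacement $-is$. Since $s \in H_\perp$ gives $\langle p(s), p(a)\rangle = \langle p(s), p(b)\rangle = \langle p(s), p(s)\rangle = 0$, the hypotheses of the explicit formulas hold at each leg and each intermediate bracket $\langle v_i, p(s)\rangle$ telescopes to $\langle v, p(s)\rangle$; a direct four-term computation shows that the round-trip endomorphism of $F(x)$ is
\[
v \;\longmapsto\; v - 2\,\langle v, p(s)\rangle\, p(s).
\]
Two constraints now pin this map down. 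Each leg sends the Forni subspace into itself, so the output lies in $F(x)$, forcing $\langle v, p(s)\rangle\, p(s) \in F(x)$. Each leg also preserves the Hodge inner product on $F$ (Lemma~\ref{lemma:parallel:transport:hodge:inv} for $P^+$ and its symmetric counterpart for $P^-$), so the output has the same Hodge norm as $v$; expanding gives $\langle v, p(s)\rangle\,[Q(v, p(s)) - \langle v, p(s)\rangle\, Q(p(s), p(s))] = 0$ for every $v \in F(x)$. If one assumes toward contradiction that $p(s) \neq 0$ and $\langle v_0, p(s)\rangle \neq 0$ for some $v_0$, the first constraint forces $p(s) \in F(x)$; the second, upgraded to all $v \in F(x)$ by linearity, forces $Q(v, p(s)) = \langle v, p(s)\rangle\, Q(p(s), p(s))$; setting $v = p(s)$ and using $\langle p(s), p(s)\rangle = 0$ gives $Q(p(s), p(s)) = 0$, contradicting positive-definiteness of $Q$ on $F$. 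Hence $\langle v, p(s)\rangle = 0$ for all $v \in F(x)$ and all $s \in T_\reals N \cap H_\perp$. Using the $SL(2,\reals)$-invariance of $N$ to deduce $\operatorname{span}_\reals(a,b) \subset T_\reals N$, we decompose $T_\reals N = \operatorname{span}(a,b) + (T_\reals N \cap H_\perp)$ and combine with Lemma~\ref{lemma:properties:Forni}(c) to conclude (a).

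Part (b) then follows quickly: given (a), Lemma~\ref{lemma:formula:v} yields $v(x+s) = v(x) + \langle v, p(s)\rangle\, p(b) = v(x)$ for $s \in T_\reals N$ in the unstable leaf, so $P^+(x,x+s)$ is the identity on $F(x)$ and $F(x+s) = F(x)$; the $P^-$ analog handles $x+i\sigma$ for $\sigma \in T_\reals N$ in the stable leaf, and the remaining $a$-direction is absorbed by $SL(2,\reals)$-invariance of $F$ together with its invariance under real rescaling (clear from $B^\reals_{t\omega} = B^\reals_\omega$); real-analyticity of $F$ then upgrades these to local constancy on $N$. The principal obstacle is extracting $\langle v, p(s)\rangle = 0$ from the loop: without the Hodge-isometry constraint, the ``curvature'' $v - 2\langle v, p(s)\rangle\, p(s)$ could plausibly be nonzero whenever $p(s)$ lands in $F$, and it is the positive-definiteness of $Q$ on the Forni subspace --- the defining feature of $F$ --- that rules this out.
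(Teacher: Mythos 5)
Your proposal is correct and takes essentially the same route as the paper: transport $v \in F(x)$ around a small square in $N$ using the explicit formula of Lemma~\ref{lemma:formula:v} (together with its $P^-$ analog), observe that the round trip must both preserve $F(x)$ and preserve the Hodge norm (Lemma~\ref{lemma:parallel:transport:hodge:inv}), and derive a contradiction from positive-definiteness of $Q$ on $F$. The only differences are cosmetic: you traverse the square with legs $s$ and $is$ (exploiting $J$-invariance of $TN$ from Proposition~\ref{prop:N:linear}), obtaining the round trip $v \mapsto v - 2\langle v, p(s)\rangle p(s)$ and a purely algebraic contradiction, whereas the paper uses legs $\delta$ and $\epsilon a$, obtaining $v \mapsto v + \epsilon\langle v, p(\delta)\rangle p(\delta)$ and invoking the freedom in $\epsilon$; you also make the local-constancy conclusion explicit, which the paper leaves implicit.
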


\begin{proof} 
Suppose (\ref{eq:no:yeti}) fails on a set of positive measure. 
Then there is a positive measure set of pairs $x',y$ in the support of $\nu$ such that such that $y-x' \in TN$ and $p(y-x') \not\in \cx \tensor
F^\perp(x')$. 

Write $x' = a' + b' \bi$. 
Let $H(x') \subset H^1(M,\Sigma,\reals)$ denote the $\reals$-linear span of
the vectors $a'$ and $b'$. Since $\cx \tensor H(x')$ coincides
with the tangent space to the $GL(2,\reals)$ orbit of $x'$, we have
$H(x') \subset T_\reals N(x')$. Let $H^\dagger(x') \subset p(T_\reals N(x'))$
denote the symplectic complement to $p(H(x'))$ in $p(T_\reals N(x'))$. 

Since $\nu$ is $SL(2,\reals)$-invariant, the conditional measures of $\nu$
along the $SL(2,\reals)$-orbit are Lebesgue. Also note that for $g \in SL(2,\reals)$ and $x = g x'$, we have
\begin{displaymath}
  TN(x) = TN(x') \qquad H(x) = H(x') \qquad \text{ and } F(x) = F(x'). 
\end{displaymath}
Since the foliation of $N = N(x)$ by $SL(2,\reals)$ orbits is transverse to the
foliation whose leaves are of the the form $x+ TN(x) \cap (\cx \tensor p^{-1}(H^\dagger(x)))$, 
for almost all pairs $x', y$ we can find $x = g x'$ in the support of
$\nu$ such that
\begin{equation}
\label{eq:tmp:y:minus:x}
  y - x \in TN(x) \cap (\cx \tensor p^{-1}(H^\dagger(x)).
\end{equation}
Let $x = a + b \bi$, and let $\delta \in T_\reals N(x)$ 
denote the real part of  $y-x$. 
Then we have by (\ref{eq:tmp:y:minus:x}),
\begin{displaymath}
\langle  p(\delta), p(a)\rangle = \langle p(\delta), p(b) \rangle = 0. 
\end{displaymath}
Also, since $F(x) = F(x')$ and $p(y - x') \not\in \cx \tensor F(x')$ we have 
\begin{displaymath}
p(\delta) \not\in F^\perp(x).   
\end{displaymath}

Recall that by Theorem~\ref{theorem:properties:forni} 
the subspace $F^\perp$ coincides with the
subspace $F^\dagger$ symplectically orthogonal to $F$. Recall
also that $\reals p(a) \oplus \reals p(b) \subset F^\perp$. 

Let $\epsilon > 0$ be sufficiently small. 
Now let $v$ be an arbitrary element of $F(x)$. We will now move $v$
around a square 
\begin{displaymath}
a+b\bi \to (a+\delta)+b\bi \to (a+\delta)+(b+\epsilon a)\bi \to a+
(b+\epsilon a)\bi \to a+b\bi.
\end{displaymath}
In other words we compute
\begin{multline*}
P^-(a+(b+\epsilon a)\bi, a+b\bi)
\, P^+((a+\delta)+(b+\epsilon a)\bi, a+
(b+\epsilon a)\bi) \\
P^-((a+\delta)+b\bi,(a+\delta)+(b+\epsilon a)\bi) \, 
P^+(a+b\bi, (a+\delta) + b\bi) v
\end{multline*}
as indicated in the picture.
\begin{displaymath}
 \begin{xy}
  \xymatrix{
      \overset{a+(b+\epsilon)\bi}{\bullet} \ar[d]_{P^-} & \overset{(a+\delta)+(b+\epsilon)\bi}{\bullet} \ar[l]^{P^+} \\
      \underset{x=a+b\bi}{\bullet} \ar[r]^{P^+}          & \underset{(a+\delta)+b\bi}{\bullet} \ar[u]_{P^-}}
  \end{xy}
\end{displaymath}
\setitemize[1]{itemindent=0pt,leftmargin=12pt,parsep=2pt}
\begin{itemize}
\item Step 1: moving from $a+b\bi$ to $(a+\delta)+b\bi$. 
Note that   since $\delta \in T_\reals N (a+b\bi)$ and $\langle
  p(\delta),p(b)\rangle$ = 0 we conclude $\delta \in W^{uu}(a+b\bi)$
and we can apply Lemma~\ref{lemma:formula:v}.
Using (\ref{eq:formula:v}) with $x = a+b\bi$ and $s = \delta$ we get
\begin{displaymath}
v \to v + \langle v,p(\delta) \, \rangle p(b) =: v_1. 
\end{displaymath}
\item Step 2: moving from $(a+\delta)+b\bi$ to $(a+\delta)+(b+\epsilon
  a)\bi$. We claim that $\epsilon a \in
    W^{ss}(a+\delta + b\bi)$. Indeed, we have $\epsilon a \in T_\reals
    N(a+ b\bi)$ (since $\epsilon a$ belongs to the tangent space to
    the $GL(2,\reals)$ orbit at $a+b\bi$). Since $N = N(x)$ is affine,
    $T_\reals N(a+\delta + b\bi) = T_\reals N(a+b\bi)$, and thus,
    $\epsilon a \in T_\reals N(a+\delta + b \bi)$. Since $\langle
    p(\epsilon a), p(a) \rangle = 0$, we obtain $\epsilon a \in W^{ss}(a +
    \delta + b\bi)$ as claimed.
    Using
  (\ref{eq:formula:v}) with the contracting and expanding directions
  reversed (and taking into account the sign change coming from the
  fact that (\ref{eq:formula:v})   was derived assuming $\langle a, b
  \rangle = +1$)  and $x = (a+\delta)+b\bi$ and $s = \epsilon a$ we get
\begin{align*}
v_1 & \to v + \langle v,p(\delta)
\rangle p(b) - 
\langle v + \langle v,p(\delta) \rangle p(b), \epsilon p(a)\rangle \, p(a+\delta)
\\
& = v + \langle v,p(\delta) \rangle p(b) + \epsilon \langle v,p(\delta)\rangle
\, p(a+\delta) =:v_2. 
\end{align*}
\item Step 3: moving from $(a+\delta)+(b+\epsilon a)\bi$ to
  $a+(b+\epsilon a)\bi$. Again, since $N$ is affine, we have
    $-\delta \in T_\reals N(a+b\bi) = T_\reals N(a+\delta +
    (b+\epsilon a) \bi)$. Since $\langle p(-\delta), p(b+\epsilon a)
    \rangle = 0$, we conclude $-\delta \in W^{uu}(a+\delta+(b+\epsilon
    a)\bi)$.
Using
  (\ref{eq:formula:v}) with $x = (a+\delta)+(b+\epsilon
  a)\bi$ and $s =   -\delta$ we get 
\begin{align*}
v_2  & \to v +  \langle v,p(\delta) \rangle p(b) + \epsilon \langle
v,p(\delta) \rangle
p(a+\delta) \\
 &  \qquad - \langle v + \langle v,p(\delta) \rangle p(b) + \epsilon \langle v,p(\delta)\rangle
p(a+\delta), p(\delta) \rangle p(b+\epsilon a) \\
 &  = v  + \langle v,p(\delta) \rangle p(b) + \epsilon \langle
v,p(\delta)\rangle 
p(a+\delta)  - \langle v,p(\delta)\rangle p(b+\epsilon a) \\
 & = v + \epsilon \langle v, p(\delta)\rangle p(\delta) =: v_3.
\end{align*}
\item Step 4: moving from $a+(b+\epsilon a)\bi$ to
  $a+b\bi$. In this case obviously $\epsilon a \in
    W^{ss}(a+(b+\epsilon a) \bi)$.
Using
  (\ref{eq:formula:v}) with $x = a+(b+\epsilon a)\bi$ and $s = -\epsilon
  a$ (and taking into account the sign change coming from reversing
  $a$ and $b$) we get
\begin{align*}
v_3 & \to  v + \epsilon
 \langle v, p(\delta)\rangle p(\delta) + \langle v + \epsilon
 \langle v, p(\delta)\rangle p(\delta), \epsilon p(a) \rangle p(a) \\
 & = v + \epsilon \langle v, p(\delta)\rangle p(\delta) =:v_4. 
\end{align*}
\end{itemize}
Thus, moving around the square, we map $v$ to $v + \epsilon \langle v,
p(\delta)\rangle p(\delta)$. This contradicts
Lemma~\ref{lemma:parallel:transport:hodge:inv}. 
Therefore (\ref{eq:no:yeti}) holds. (In 
the case when $p(\delta) \not \in F(x)$, then this computation shows that 
moving around the square does not preserve $F(x)$; 
this is a contradiction to the definition of the connection $P^+$.)
\end{proof}

We have proved:
\begin{theorem}
\label{theorem:technical}
There exists a subset $\Psi$ of the stratum $\cH_1(\kappa)$ 
with $\nu(\Psi) = 1$ such
that for all $x \in \Psi$ there exists a neighborhood $U(x)$ such that
for all $y \in U(x) \cap \Psi$ we have $p(y-x) \in F^\perp(x)$. 
\end{theorem}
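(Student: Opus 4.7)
Theorem~\ref{theorem:technical} is essentially a repackaging of Proposition~\ref{prop:no:yeti} as a local property of pairs of nearby generic points, rather than as a statement about the tangent space of the real-analytic envelope. My proof will consist of choosing $\Psi$ as the intersection of the full-measure sets on which each preceding structural result applies, and then translating ``$p(T_\reals N) \subset F^\perp(x)$'' into ``$p(y-x) \in F^\perp(x)$ for $y$ close to $x$.''

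First I would define $\Psi \subseteq \operatorname{supp}\nu$ to be the intersection of the following full-measure sets: the set of Oseledets-generic points for the Kontsevich--Zorich cocycle; the full-measure set from Proposition~\ref{prop:good:regular} consisting of $x$ with $x \in N(x)_{reg}$; the full-measure set on which $N(x)$ is affine, provided by Proposition~\ref{prop:N:linear}; the full-measure set on which the conclusions of Proposition~\ref{prop:no:yeti} hold, namely $p(T_\reals N(x)) \subset F^\perp(x)$ together with local constancy of $F$ on $N(x)$; and the set on which Lemma~\ref{lemma:properties:Forni} holds and a neighborhood $U(x)$ as in Lemma~\ref{lemma:N:stabilizes} has been fixed. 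This is a finite intersection of full-measure sets, so $\nu(\Psi) = 1$.

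For $x \in \Psi$, take $U(x)$ as in Lemma~\ref{lemma:N:stabilizes}, so that $N(x) \subset U(x)$ is the real-analytic envelope of $\nu$ near $x$ and, by our choice of $x$, is affine with real linear model $T_\reals N(x) \subset H^1(M,\Sigma,\reals)$. The key set-theoretic observation is that $\operatorname{supp}(\nu) \cap U(x) \subset N(x)$: indeed $N(x)$ is closed in $U(x)$ and $\nu(U(x) \setminus N(x)) = 0$ by definition of the envelope, so the support of $\nu|_{U(x)}$ is contained in the closed set $N(x)$. Hence any $y \in U(x) \cap \Psi$ lies in $N(x)$, and since $N(x)$ is affine this gives $y - x \in \cx \tensor T_\reals N(x)$.

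Applying $p$ and invoking the inclusion $p(T_\reals N(x)) \subset F^\perp(x)$ from Proposition~\ref{prop:no:yeti} yields $p(y-x) \in \cx \tensor F^\perp(x)$, which is the desired conclusion under the natural convention that $F^\perp(x)$ in the complex setting denotes its complexification. Because Proposition~\ref{prop:no:yeti} also asserts that $F$ is locally constant on $N$, the space $F^\perp(y)$ agrees with $F^\perp(x)$ for all $y \in \Psi$ close to $x$, so there is no ambiguity of base point. The only mildly delicate point---and the closest thing to an obstacle---is the set-theoretic containment $\operatorname{supp}(\nu) \cap U(x) \subset N(x)$, which upgrades the almost-everywhere statement of Proposition~\ref{prop:no:yeti} to the pointwise statement required here; everything else is bookkeeping of full-measure conditions and direct citation of Proposition~\ref{prop:no:yeti}.
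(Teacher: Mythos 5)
Your proposal is correct and matches the paper's (implicit) argument: the paper offers no explicit proof beyond the words ``We have proved:'' preceding the theorem statement, so the result is understood to be an immediate repackaging of Proposition~\ref{prop:no:yeti} together with the definition of the real-analytic envelope. Your expansion supplies exactly the bookkeeping the paper leaves to the reader --- choosing $\Psi$ as the intersection of the relevant full-measure sets, observing that $\operatorname{supp}(\nu)\cap U(x)\subset N(x)$ because $N(x)$ is closed and conull in $U(x)$, and applying $p(T_\reals N(x))\subset F^\perp(x)$ --- and each step is sound.
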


Theorem~\ref{theorem:technical} is used in \cite{Eskin:Mirzakhani:measures} 
to complete the proof
that any $SL(2,\reals)$ invariant measure is affine.

\begin{proof}[Proof of Theorem~\ref{theorem:forni:transverse:to:tangent}]
Let $\cN$ be an affine submanifold, and let $\nu$ be the affine
measure supported on $\cN$. Then for $\nu$-almost all $x \in \cN$,
$N(x) = \cN$, where $N(x)$ is as in \S\ref{sec:real:analytic:envelope}. 
Then parts (a) and (b) of
Theorem~\ref{theorem:forni:transverse:to:tangent} follow immediately
from Proposition~\ref{prop:no:yeti}. 
Also part (c) of Theorem~\ref{theorem:forni:transverse:to:tangent}
follows immediately from part (a) of Theorem~\ref{theorem:properties:forni}. 
\end{proof}


\begin{proof}[Proof of
Theorem~\ref{theorem:affine:symplectic}] Let $\cN$ be an affine
submanifold, and let $\nu$ be the affine measure supported on
$\cN$. By Proposition~\ref{prop:no:yeti}, then tangent space is
contained in the orthogonal complement of the Forni subspace, i.e.\ 
$T_\reals(\cN) \subset F^\perp$. Then, by 
Theorem~\ref{theorem:properties:forni} (b), the tangent space 
$T_\reals(\cN)$ is
symplectic.   
\end{proof}

\begin{proof}[Proof of
    Theorem~\ref{theorem:locally:constant:semisimple}]
The proof is a lightly modified version of the proof of
  \cite[Theorem A.6]{Eskin:Mirzakhani:measures}.
By Proposition~\ref{prop:no:yeti} and Theorem~\ref{theorem:properties:forni} 
we have the locally constant decomposition
\begin{displaymath}
H^1(M, \reals) = F^\perp \dirsum F,
\end{displaymath}
where both factors are $SL(2,\reals)$-invariant. 
Suppose $L$ is a
locally constant $SL(2,\reals)$-invariant 
subbundle of the Hodge bundle. Let $L^\dagger$ be the symplectic
complement to $L$, and let $L_1  = L \cap L^\dagger$. Then, $L_1$ is
locally constant. Also $L_1$ is isotropic, and therefore
by \cite[Theorem A.4]{Eskin:Mirzakhani:measures}, 
\cite[Theorem A.5]{Eskin:Mirzakhani:measures} and
Theorem~\ref{theorem:forni:subspace:ergodic:def}, 
$L_1 \subset F$.  

Note that by
Lemma~\ref{lemma:parallel:transport:hodge:inv} and
Proposition~\ref{prop:no:yeti}, $F$ is locally constant and 
the monodromy representation
restricted to $F$ has compact image. Therefore there exists a
an $SL(2,\reals)$-invariant locally constant subbundle $L_2$ of $F$
such that $F = L_1 \oplus L_2$. Let $L_3 = L_2 \oplus F^\perp$; then
$L_3$ is also locally constant and $SL(2,\reals)$-invariant. Then,
\begin{displaymath}
L = L_1 \oplus (L \cap L_3), \qquad L^\dagger = L_1 \oplus (L^\dagger
\cap L_3),
\end{displaymath}
and
\begin{displaymath}
H^1(M,\reals) = L_1 \oplus (L \cap L_3) \oplus (L^\dagger \cap L_3).
\end{displaymath}
Thus $L^\dagger \cap L_3$ is an $SL(2,\reals)$-invariant locally
constant complement to $L$.
\end{proof}

\end{document}